\newcommand{\R}{\mathbb{R}}
\newcommand{\K}{{\mathcal K}}
\newcommand{\G}{{\mathcal G}}
\newtheorem{theorem}{Theorem}[section]
\newtheorem{lemma}[theorem]{Lemma}
\newtheorem{corollary}[theorem]{Corollary}
\theoremstyle{definition}
\newtheorem{definition}[theorem]{Definition}
\theoremstyle{remark}
\newtheorem{remark}[theorem]{Remark}
\begin{document}
\begin{frontmatter}
 \title{\normalsize{\textit{Dedicated to Prof. Richard M. Aron for his contributions to Mathematics.}} \\ \vspace{.4cm} \Large{Hermite-Hadamard and Hermite-Hadamard-Fej\'{e}r type Inequalities for Generalized Fractional Integrals}\tnoteref{fn1}} \normalsize


\author{Hua Chen}
\ead{chenhua@udel.edu}
\author{Udita N. Katugampola\corref{corresp}}
\ead{uditanalin@yahoo.com}
%

\cortext[corresp]{Corresponding author.  Tel.: +13028312694.}

%
\address{Department of Mathematical Sciences, University of Delaware, Newark, DE 19716, U.S.A.}
\begin{abstract}
In this paper we obtain the Hermite-Hadamard and Hermite-Hadamard-Fej\'{e}r type inequalities for fractional integrals which generalize the two familiar fractional integrals namely, the Riemann-Liouville and the Hadamard fractional integrals into a single form. We prove that, in most cases, we obtain the Riemann--Liouville and the Hadamard equivalence just by taking limits when a parameter $\rho \rightarrow 1$ and $\rho \rightarrow 0^+$, respectively. 
\end{abstract}
\begin{keyword}
Hermite-Hadamard Inequalities \sep Hermite-Hadamard-Fej{\'e}r inequalities \sep Riemann-Liouville fractional integral \sep Hadamard fractional integral \sep Katugampola fractional integral  \sep convexity
%
%
\MSC[2010] 26A33 \sep 26D10 \sep 26D15
\end{keyword}
\end{frontmatter}
%

%
\section{Introduction}
%
%

%
%

The classical Hermite–Hadamard inequality provides estimates of the mean value of a continuous convex function $f:[a, b]\rightarrow \mathbb{R}$. The function $f:[a, b] \subset\mathbb{R} \rightarrow \mathbb{R}$, is said to be convex if the following inequality holds
\[
    f(\lambda x + (1 - \lambda)y) \leq \lambda f(x) + (1 - \lambda)f(y),
\] 
for all $x, y \in [a, b]$ and $\lambda \in [0, 1]$. We say that $f$ is concave if $(-f)$ is convex.

Let $f: I \to \R $ be a convex function defined on the interval $I$ of real numbers and $a, b \in I$ with $a < b$, then 
\begin{eqnarray}
\label{HHineq}
f\left(\frac{a+b}{2}\right) \leq \frac{1}{b-a} \int_a^b f(x)dx \leq \frac{f(a)+f(b)}{2},
\end{eqnarray}
which is known as the Hermite--Hadamard inequality \cite{hadamard1}. In \cite{fejer1},  Fej{\'e}r developed the weighted generalization of the Hermite--Hadamard inequality given below.
\begin{theorem}
\label{thmfejer}
Let $f:[a, b] \to \R$ be a convex function. Then the inequality 
\begin{equation}
\label{HHFineq}
f\left(\frac{a+b}{2}\right)\int_a^b g(x)dx \leq \frac{1}{b-a} \int_a^b f(x)g(x)dx \leq \frac{f(a)+f(b)}{2} \int_a^b g(x)dx
\end{equation}
holds, where $g:[a, b] \to \R$ is non-negative, integrable and symmetric to $(a+b)/2$.
\end{theorem}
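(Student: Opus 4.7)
The plan is to establish the two halves of the Fejér inequality separately, using only the convexity of $f$ together with the reflection symmetry $g(a+b-x) = g(x)$ that follows from the hypothesis that $g$ is symmetric about $(a+b)/2$. No tools beyond the definition of convexity and a single change of variables are needed.

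For the right-hand inequality, I would parametrize each $x \in [a,b]$ as $x = t a + (1-t) b$ with $t = (b-x)/(b-a) \in [0,1]$, and apply convexity in the form
\[
    f(x) \;\leq\; \tfrac{b-x}{b-a}\,f(a) + \tfrac{x-a}{b-a}\,f(b).
\]
Multiplying by the nonnegative weight $g(x)$ and integrating over $[a,b]$ reduces the problem to showing
\[
    \int_a^b \tfrac{b-x}{b-a}\,g(x)\,dx \;=\; \int_a^b \tfrac{x-a}{b-a}\,g(x)\,dx \;=\; \tfrac{1}{2}\int_a^b g(x)\,dx.
\]
The equality of the two integrals on the left follows from the substitution $x \mapsto a+b-x$ combined with the symmetry of $g$; their sum is obviously $\int_a^b g(x)\,dx$. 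This delivers the right bound.

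For the left-hand inequality, I would use the identity $\frac{a+b}{2} = \frac{x + (a+b-x)}{2}$ valid for every $x \in [a,b]$, together with the midpoint form of convexity, to obtain
\[
    f\!\left(\tfrac{a+b}{2}\right) \;\leq\; \tfrac{1}{2}\bigl(f(x) + f(a+b-x)\bigr).
\]
Multiplying by $g(x)$ and integrating, then performing the change of variable $u = a+b-x$ and invoking $g(a+b-u) = g(u)$, collapses $\int_a^b f(a+b-x) g(x)\,dx$ to $\int_a^b f(u) g(u)\,du$. The two terms on the right therefore coincide, and the factor of $\tfrac{1}{2}$ cancels, yielding the left bound.

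The proof is essentially bookkeeping; the only subtle step is deploying the symmetry of $g$ at exactly the right moment within each change of variables, which is where any slip would occur. Since the statement as printed carries a stray factor $\frac{1}{b-a}$ in the middle term that does not appear in the classical Fejér inequality, I would also verify that both sides of the argument scale consistently and flag this as a probable typographical artifact rather than a substantive change in the inequality.
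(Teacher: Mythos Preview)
Your proof is correct and is the standard elementary argument for the classical Fej\'er inequality. However, there is nothing to compare it against: the paper does not prove this statement. Theorem~\ref{thmfejer} appears in the Introduction as a cited classical result (attributed to Fej\'er, reference~\cite{fejer1}), stated without proof as background for the paper's own generalizations. So your proposal is not competing with any argument in the paper; it simply supplies a proof where the paper supplies none.

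Your observation about the factor $\tfrac{1}{b-a}$ is also correct: as printed, the middle term of \eqref{HHFineq} is off by that factor relative to the classical inequality, and your argument establishes the inequality without it. This is indeed a typographical slip in the paper rather than an intended modification.
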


Since then, many researches generalized and extended the two inequalities \eqref{HHineq} and \eqref{HHFineq}. For related results, for example, see \cite{HH1,HH2,HH3,HH4,HH5,HH6,HH7,HH8,HH9} and the references therein. In \cite{sari1}, Sarikaya et al. generalized the Hermite--Hadamard type inequalities via Riemann--Liouville fractional integrals. Then in \cite{Iscan1}, {\.I}{\c{s}}can extended Sarikaya's results to Hermite--Hadamard--Fej{\'e}r type inequalities for fractional integrals. Further results involving the two inequalities in question with applications to fractional integrals can be found, for example, in \cite{sari1,HH11,HH12,HH13} and the references therein. 

In \cite{udita1}, the second author introduces an Erd\'{e}lyi-Kober type fractional integral operator and uses that integral to define a new fractional derivative in \cite{udita2}, which generalizes the Riemann-Liouville and the Hadamard fractional derivatives to a single form and argued that it is not possible to derive the Hadamard equivalence operators from the corresponding Erd\'{e}lyi-Kober type operators, thus making the new derivative more appropriate for modeling certain phenomena which undergo bifurcation-like behaviors. For further properties of the Erd\'{e}lyi-Kober operators, the interested reader is refereed to, for example, \cite{Kiry,KilSri,Samko}. According to the literature, the newly defined fractional operators are know known as the \textit{Katugampola fractional integral} and \textit{derivatives}, respectively. For consistency, we use the same name for those operators in question. It can be shown that the derivatives in question satisfy the fractional derivative criteria (test) given in \cite{what2,corr1}. These operators have applications in fields such as in probability theory \cite{prob1}, theory of inequalities \cite{ineq1,HH1,HH2}, variational principle \cite{vara1}, numerical analysis \cite{numr1}, and Langevin equations \cite{lang1}. A Caputo-type modification of the operator in question can be found in \cite{caka}. The interested reader is referred, for example, to \cite{udita4,u-1,u-3,u-4,u-5,u-6,u-7,u-8,u-9,u-10} for further results on these and similar operators. The Mellin transforms of the generalized fractional integrals and derivatives defined in \cite{udita1} and \cite{udita2}, respectively, are given in \cite{udita3}. The same reference also studies a class of sequences that are closely related to the Stirling numbers of the $2^{nd}$ kind. The $\rho-$Laplace and $\rho-$Fourier transforms of the Katugampola fractional operators are given in \cite{rLF}.

In the following, we will give some necessary definitions and preliminary results which are used and referred to throughout this paper.

\begin{definition}[\cite{pod}] 
Let $\alpha >0$ with $n-1 < \alpha \leq n, \; n \in \mathbb{N}$, and $a < x<b$. The left- and right-side Riemann--Liouville fractional integrals of order $\alpha$ of a function $f$ are given by 
\[
     J^\alpha_{a+}f(x) = \frac{1}{\Gamma(\alpha)}\int_{\rm a}^x (x-t)^{\alpha -1} f(t) \, dt  \quad \mbox{and} \quad J^\alpha_{b-}f(x) = \frac{1}{\Gamma(\alpha)}\int_x^b (t-x)^{\alpha -1} f(t) \, dt 
\]
\noindent respectively, where $\Gamma(\cdot)$ is the Euler's gamma function defined by
$$
\Gamma(x) = \int_0^\infty t^{x-1}e^{-t} \, dt.
$$
\end{definition}

\begin{definition}[\cite{Samko}] 
Let $\alpha >0$ with $n-1 < \alpha \leq n, \; n \in \mathbb{N}$, and $a < x<b$. The left- and right-side Hadamard fractional integrals of order $\alpha$ of a function $f$ are given by 
\begin{equation}\label{Had1}
    H^\alpha_{a+}f(x) = \frac{1}{\Gamma(\alpha)}\int_{\rm a}^x \left(\ln \frac{x}{t} \right)^{\alpha -1} \frac{f(t)}{t} \, dt \quad \mbox{and} \quad H^\alpha_{b-}f(x) = \frac{1}{\Gamma(\alpha)}\int^{\rm b}_x \left(\ln \frac{t}{x} \right)^{\alpha -1} \frac{f(t)}{t} \, dt .
\end{equation}
\end{definition}

In \cite{sari1}, Sarikaya et al. established the Hermite--Hadamard inequalities via the Riemann--Liouville fractional integrals as follows.
\begin{theorem} \label{sarikaya_thm1}
Let $f:[a, b]\to \R$ be a positive function with $0\leq a <b$ and $f\in L[a, b]$. If $f$ is a convex function on $[a, b]$, then the following inequalities hold 
\begin{equation} \label{Sari_HH1}
f\left(\frac{a+b}{2}\right) \leq \frac{\Gamma(\alpha+1)}{2(b-a)^\alpha}[J^\alpha_{a+}f(b) +J^\alpha_{b-}f(a) ] \leq \frac{f(a)+f(b)}{2}
\end{equation}
with $\alpha>0$.
\end{theorem}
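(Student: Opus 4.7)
The plan is to mimic the classical proof of \eqref{HHineq} but with the weight $t^{\alpha-1}$ inserted so that, after the linear changes of variable $x=ta+(1-t)b$ and $x=(1-t)a+tb$, the integrand on $[0,1]$ transforms into the two Riemann--Liouville integrals appearing in \eqref{Sari_HH1}. The factor $\Gamma(\alpha+1)/(b-a)^\alpha$ is just what comes out of $\int_0^1 t^{\alpha-1}\,dt = 1/\alpha$ together with the Jacobian $(b-a)^{\alpha-1}/(b-a)$ produced by each substitution.

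For the left inequality, I would write the midpoint as the average of $ta+(1-t)b$ and $(1-t)a+tb$ and apply convexity to obtain
\begin{equation*}
2f\!\left(\frac{a+b}{2}\right) \leq f(ta+(1-t)b) + f((1-t)a+tb), \qquad t\in[0,1].
\end{equation*}
Multiplying by $t^{\alpha-1}$ and integrating from $0$ to $1$, the left-hand side becomes $\frac{2}{\alpha}f(\frac{a+b}{2})$, and the two substitutions above turn the right-hand side into $\frac{\Gamma(\alpha)}{(b-a)^\alpha}\bigl[J^\alpha_{a+}f(b)+J^\alpha_{b-}f(a)\bigr]$. Using $\alpha\Gamma(\alpha)=\Gamma(\alpha+1)$ gives the first half of \eqref{Sari_HH1}.

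For the right inequality I would instead invoke convexity directly in each term,
\begin{equation*}
f(ta+(1-t)b) \leq tf(a)+(1-t)f(b), \qquad f((1-t)a+tb) \leq (1-t)f(a)+tf(b),
\end{equation*}
add the two, multiply by $t^{\alpha-1}$, and integrate over $[0,1]$; the right side collapses to $\bigl(f(a)+f(b)\bigr)\int_0^1 t^{\alpha-1}dt = (f(a)+f(b))/\alpha$, while the left side is the same Riemann--Liouville expression as before. Rearranging produces the second half of \eqref{Sari_HH1}.

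There is no serious obstacle here; the whole argument is really a careful bookkeeping of two linear substitutions and the identification of the resulting $(x-a)^{\alpha-1}$ and $(b-x)^{\alpha-1}$ kernels with $J^\alpha_{a+}$ and $J^\alpha_{b-}$. The only point that deserves a brief comment in the write-up is why the substitutions are legitimate under the mere hypotheses $f\in L[a,b]$ and $f$ convex: convexity on the closed interval forces continuity on $(a,b)$ and boundedness near the endpoints, so all the integrals appearing after the change of variable are absolutely convergent and Fubini/substitution apply without difficulty.
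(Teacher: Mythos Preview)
Your argument is correct and is exactly the $\rho=1$ specialization of the method the paper uses to prove its generalized version (Theorem~\ref{th:hh1}): write the midpoint as the average of $ta+(1-t)b$ and $(1-t)a+tb$, multiply the resulting convexity inequality by the power weight $t^{\alpha-1}$, integrate over $[0,1]$, and identify the two integrals via the linear substitutions with $J^\alpha_{a+}f(b)$ and $J^\alpha_{b-}f(a)$; the second inequality is handled the same way after summing the two direct convexity bounds. There is nothing to add or correct.
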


\begin{theorem} \label{sarikaya_thm2}
Let $f:[a, b]\to \R$ be a differentiable mapping on $(a, b)$ with $a<b$. If $|f'|$ is a convex function on $[a, b]$, then the following inequality holds for $\alpha>0$,
\begin{equation} \label{Sari_HH2}
\left| \frac{f(a)+f(b)}{2}-\frac{\Gamma(\alpha+1)}{2(b-a)^\alpha}[J^\alpha_{a+}f(b) +J^\alpha_{b-}f(a) ] \right| \leq \frac{b-a}{2(\alpha+1)}\left( 1-\frac{1}{2^\alpha}\right)[|f'(a)|+|f'(b)|].
\end{equation}
\end{theorem}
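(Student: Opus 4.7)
The plan is to reduce the inequality to an integral-identity-plus-estimate argument in the classical Sarikaya style. The first step will be to establish an identity of the form
\begin{equation*}
\frac{f(a)+f(b)}{2} - \frac{\Gamma(\alpha+1)}{2(b-a)^{\alpha}}\bigl[J^\alpha_{a+}f(b) + J^\alpha_{b-}f(a)\bigr] = \frac{b-a}{2}\int_0^1 \bigl[(1-t)^\alpha - t^\alpha\bigr]\, f'\bigl(ta+(1-t)b\bigr)\, dt.
\end{equation*}
To obtain it, I would split the right-hand integral into two pieces, change variables via $x=ta+(1-t)b$, and integrate by parts once in each piece. The $(1-t)^\alpha$ piece produces the boundary term $f(b)/(b-a)$ together with $-\Gamma(\alpha+1)(b-a)^{-\alpha-1}J^\alpha_{b-}f(a)$, while the $t^\alpha$ piece produces $-f(a)/(b-a)$ together with $\Gamma(\alpha+1)(b-a)^{-\alpha-1}J^\alpha_{a+}f(b)$; combining and multiplying by $(b-a)/2$ yields the identity.

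The second step is to take absolute values on both sides. Using the convexity of $|f'|$ I can bound $|f'(ta+(1-t)b)|\le t|f'(a)|+(1-t)|f'(b)|$, so the proof reduces to estimating
\begin{equation*}
\int_0^1 \bigl|(1-t)^\alpha - t^\alpha\bigr|\, \bigl(t|f'(a)|+(1-t)|f'(b)|\bigr)\, dt.
\end{equation*}
The sign of $(1-t)^\alpha - t^\alpha$ changes at $t=1/2$, so I would split the integral over $[0,1/2]$ and $[1/2,1]$ and, in the second piece, apply the substitution $s=1-t$. This symmetry collapses the coefficient of the weight $(1-t)^\alpha - t^\alpha$ on $[0,1/2]$ into the constant $|f'(a)|+|f'(b)|$, leaving only the one-variable integral $\int_0^{1/2}\bigl[(1-t)^\alpha - t^\alpha\bigr]\,dt$ to evaluate. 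That integral equals $\frac{1}{\alpha+1}\bigl(1-2^{-\alpha}\bigr)$, and putting everything together reproduces the stated bound.

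The main obstacle, and the only step requiring real care, is setting up the identity in the first paragraph: one must line up the two boundary contributions so that the $f(a)$ and $f(b)$ terms combine with the correct sign into $\frac{f(a)+f(b)}{2}$, and the two fractional integrals combine with matching normalization into $\frac{\Gamma(\alpha+1)}{2(b-a)^{\alpha}}[J^\alpha_{a+}f(b)+J^\alpha_{b-}f(a)]$. The remaining convexity estimate and the closed-form integral are routine. A mild regularity check is also warranted: the integration by parts is justified because $f$ is differentiable on $(a,b)$ and the singular factors $(x-a)^{\alpha-1}$, $(b-x)^{\alpha-1}$ are integrable for $\alpha>0$, while the boundary terms $(x-a)^\alpha f(x)$ and $(b-x)^\alpha f(x)$ vanish at the singular endpoint.
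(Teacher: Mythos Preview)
Your proposal is correct. Note, however, that in this paper the statement is only \emph{quoted} from Sarikaya et al.\ \cite{sari1} as background; the paper does not give its own direct proof of it. What the paper does prove is the Katugampola-integral generalization: Lemma~\ref{lem1} establishes the identity
\[
\frac{f(a^\rho)+f(b^\rho)}{2} - \frac{\alpha\rho^\alpha \Gamma(\alpha+1)}{2(b^\rho-a^\rho)^\alpha}\bigl[{}^\rho I^\alpha_{a+}f(b^{\rho})+ {}^\rho I^\alpha_{b-}f(a^{\rho})\bigr]
= \frac{b^\rho - a^\rho}{2}\int_0^1\bigl[(1-t^\rho)^\alpha - t^{\rho\alpha}\bigr]t^{\rho-1}f'\bigl(t^\rho a^\rho+(1-t^\rho)b^\rho\bigr)\,dt,
\]
and Theorem~\ref{th4} then applies the convexity of $|f'|$ and splits the integral at $t=2^{-1/\rho}$ to evaluate it. Setting $\rho=1$ recovers exactly your identity and your estimate, and the paper explicitly remarks that this specialization reproduces Sarikaya's Theorem~3. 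So your argument is the same as the one underlying the paper's generalization; the only cosmetic difference is that you exploit the substitution $s=1-t$ on $[1/2,1]$ to collapse the weight into $|f'(a)|+|f'(b)|$ before integrating, whereas the paper's proof of Theorem~\ref{th4} carries the full expansion through and simplifies at the end.
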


Further, in \cite{Iscan1}, {\.I}{\c{s}}can extended these results to Hermite--Hadamard--Fej{\'e}r type inequalities as follows.

\begin{theorem} \label{Iscan_thm1}
Let $f:[a, b]\to \R$ be a convex function with $a<b$ and $f\in L[a,b]$. If $g:[a, b]\to \R$ is non-negative, integrable and symmetric to $(a+b)/2$, then the following inequalities for fractional integrals hold
\begin{equation} \label{Iscan_HH1}
f\left(\frac{a+b}{2}\right)[J^\alpha_{a+}g(b)+J^\alpha_{b-}g(a)] \leq [J^\alpha_{a+}(gf)(b) +J^\alpha_{b-}(gf)(a) ] \leq \frac{f(a)+f(b)}{2} [J^\alpha_{a+}g(b)+J^\alpha_{b-}g(a)]
\end{equation}
with $\alpha>0$.
\end{theorem}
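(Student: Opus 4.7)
The plan is to imitate the strategy of Theorem \ref{sarikaya_thm1} and its Fej\'er-weighted extension by {\.I}{\c{s}}can, reducing the entire double inequality to elementary pointwise estimates on $[0,1]$ via the affine change of variable $x=ta+(1-t)b$. The essential new ingredient beyond the convex-function inequalities already used in the unweighted case is the pointwise symmetry $g(ta+(1-t)b)=g((1-t)a+tb)$, which is a direct consequence of the hypothesis that $g$ is symmetric about $(a+b)/2$.

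First I would recast each one-sided Riemann--Liouville integral in the theorem as an integral on $[0,1]$. Writing $\varphi(t):=g(ta+(1-t)b)$, one obtains
\begin{align*}
J^\alpha_{a+}(fg)(b) &= \tfrac{(b-a)^\alpha}{\Gamma(\alpha)}\int_0^1 t^{\alpha-1}f(ta+(1-t)b)\varphi(t)\,dt, \\
J^\alpha_{b-}(fg)(a) &= \tfrac{(b-a)^\alpha}{\Gamma(\alpha)}\int_0^1 (1-t)^{\alpha-1}f(ta+(1-t)b)\varphi(t)\,dt,
\end{align*}
together with the analogous identities in which $f$ is replaced by $1$. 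A key observation, obtained by combining the substitution $t\mapsto 1-t$ with the identity $\varphi(t)=\varphi(1-t)$, is that
\[
\int_0^1 t^{\alpha-1}f((1-t)a+tb)\varphi(t)\,dt=\tfrac{\Gamma(\alpha)}{(b-a)^\alpha}J^\alpha_{b-}(fg)(a),
\]
and in particular $J^\alpha_{a+}g(b)=J^\alpha_{b-}g(a)$.

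For the left-hand inequality I would apply midpoint convexity,
\[
f\!\left(\tfrac{a+b}{2}\right)\leq \tfrac{1}{2}\bigl[f(ta+(1-t)b)+f((1-t)a+tb)\bigr],
\]
multiply through by the nonnegative weight $t^{\alpha-1}\varphi(t)$, and integrate over $[0,1]$. The left side collapses to $f((a+b)/2)J^\alpha_{a+}g(b)$, which by the symmetry identity equals $\tfrac{1}{2}f((a+b)/2)[J^\alpha_{a+}g(b)+J^\alpha_{b-}g(a)]$, while the right side collapses to $\tfrac{1}{2}[J^\alpha_{a+}(fg)(b)+J^\alpha_{b-}(fg)(a)]$ via the observation above. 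For the right-hand inequality I would instead begin from the two standard convexity estimates $f(ta+(1-t)b)\leq tf(a)+(1-t)f(b)$ and $f((1-t)a+tb)\leq (1-t)f(a)+tf(b)$, whose sum is bounded by $f(a)+f(b)$, multiply by the same nonnegative weight, and integrate; the same bookkeeping produces the desired upper bound.

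No step is genuinely difficult; the only place requiring care is the pairing of the weight $t^{\alpha-1}$ with $f((1-t)a+tb)$ (rather than with $f(ta+(1-t)b)$) and the recognition, after applying $t\mapsto 1-t$ together with the symmetry of $g$, that the resulting integral coincides with $\tfrac{\Gamma(\alpha)}{(b-a)^\alpha}J^\alpha_{b-}(fg)(a)$. Once this identification is in hand, both halves of the stated double inequality follow from a single sign-preserving integration against a nonnegative weight.
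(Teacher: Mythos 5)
Your argument is correct: the change of variables, the identity $J^\alpha_{a+}g(b)=J^\alpha_{b-}g(a)$, and the pairing of the weight $t^{\alpha-1}\varphi(t)$ with $f((1-t)a+tb)$ via $t\mapsto 1-t$ and $\varphi(t)=\varphi(1-t)$ all check out, and both halves of \eqref{Iscan_HH1} follow exactly as you describe (the only hypothesis you use implicitly is that $fg$ is integrable, which holds since a finite convex function on $[a,b]$ is bounded). Note, however, that the paper does not prove this statement directly: Theorem \ref{Iscan_thm1} is quoted from {\.I}{\c{s}}can \cite{Iscan1}, and your write-up is essentially {\.I}{\c{s}}can's original proof. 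The paper's own route to the inequality is different in organization: it proves the more general Theorem \ref{thm3.10} for the Katugampola integrals $^\rho I^\alpha_{a\pm}$ applied to the symmetrized function $F(x)=f(x)+f(a+b-x)$, where no symmetry of $g$ is required because the symmetrization is absorbed into $F$, and then recovers \eqref{Iscan_HH1} in the remark by assuming $g$ symmetric (so $F=2f$ after the limit identifications) and letting $\rho\to 1$, using $\lim_{\rho\to1}\,^\rho I^\alpha_{a+}(gF)(b)=J^\alpha_{a+}(gf)(b)+J^\alpha_{b-}(gf)(a)$ and its counterpart. The underlying mechanism is the same in both treatments (multiply the two convexity inequalities by a nonnegative kernel containing $g$ and integrate), but your version is self-contained and elementary at $\rho=1$ and needs $g$'s symmetry throughout, whereas the paper's version buys the generalized parameter $\rho$ and a statement valid for arbitrary nonnegative integrable $g$, at the cost of introducing $F$ and a limiting argument to reach the stated Riemann--Liouville form.
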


\begin{theorem} \label{Iscan_thm2}
Let $f:[a, b]\to \R$ be a differentiable mapping on $(a, b)$ and $f'\in L[a,b]$ with $a<b$. If $|f'|$ is convex on $[a, b]$ and $g:[a, b]\to \R$ is continuous and symmetric to $(a+b)/2$, then the following inequality holds
\begin{equation} \label{Iscan_HH2}
\left| \frac{f(a)+f(b)}{2} [J^\alpha_{a+}g(b)+J^\alpha_{b-}g(a)]- [J^\alpha_{a+}(gf)(b) +J^\alpha_{b-}(gf)(a) ] \right| \leq \frac{(b-a)^{\alpha+1}||g||_{\infty}}{(\alpha+1)\Gamma(\alpha+1)}\left( 1-\frac{1}{2^\alpha}\right)[|f'(a)|+|f'(b)|]
\end{equation}
with $\alpha>0$, where $||g||_\infty = \sup_{t\in[a, b]}|g(x)|$.
\end{theorem}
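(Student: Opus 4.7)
The plan is to follow the same blueprint as in İşcan's proof of \eqref{Iscan_HH1}, but upgraded to the level of an integral identity à la Sarikaya's Lemma. First I would show, by the change of variable $s\mapsto a+b-s$ and the symmetry $g(s)=g(a+b-s)$, that
\[
J^\alpha_{a+}g(b)=J^\alpha_{b-}g(a)=\frac{1}{\Gamma(\alpha)}\int_a^b(b-s)^{\alpha-1}g(s)\,ds,
\]
and analogously that $J^\alpha_{a+}(gf)(b)+J^\alpha_{b-}(gf)(a)$ combines into one integral whose weight is $(b-s)^{\alpha-1}g(s)$ applied to $f(s)+f(a+b-s)$.

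Next I would integrate by parts twice, once in $J^\alpha_{a+}(gf)(b)$ using $u=f$ and $dv=(b-t)^{\alpha-1}g(t)\,dt$ with the antiderivative $v(t)=-\int_t^b (b-s)^{\alpha-1}g(s)\,ds$, and once in $J^\alpha_{b-}(gf)(a)$ using $u=f$ and $v(t)=\int_a^t (s-a)^{\alpha-1}g(s)\,ds$. The boundary terms produce exactly $f(a)J^\alpha_{a+}g(b)+f(b)J^\alpha_{b-}g(a)=[f(a)+f(b)]J^\alpha_{a+}g(b)$, which cancels the first half of the left-hand side of \eqref{Iscan_HH2}. What remains is the identity
\[
\frac{f(a)+f(b)}{2}[J^\alpha_{a+}g(b)+J^\alpha_{b-}g(a)]-[J^\alpha_{a+}(gf)(b)+J^\alpha_{b-}(gf)(a)]=\frac{1}{\Gamma(\alpha)}\int_a^b K(t)\,f'(t)\,dt,
\]
where $K(t):=\int_a^t (s-a)^{\alpha-1}g(s)\,ds-\int_t^b(b-s)^{\alpha-1}g(s)\,ds$.

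The key trick is now to use the symmetry of $g$ once more inside $K(t)$. Applying $s\mapsto a+b-s$ to the first summand shows that, for $t\in[a,(a+b)/2]$,
\[
K(t)=-\int_t^{a+b-t}(b-s)^{\alpha-1}g(s)\,ds,
\]
hence $|K(t)|\le \|g\|_\infty\,\bigl|(b-t)^\alpha-(t-a)^\alpha\bigr|/\alpha$, and by the antisymmetry $K(a+b-t)=-K(t)$ the same bound holds on all of $[a,b]$. Taking absolute values, applying the triangle inequality, and changing variables via $t=(1-\lambda)a+\lambda b$ reduces the problem to estimating
\[
\frac{\|g\|_\infty (b-a)^{\alpha+1}}{\Gamma(\alpha+1)}\int_0^1\bigl|(1-\lambda)^\alpha-\lambda^\alpha\bigr|\,\bigl|f'((1-\lambda)a+\lambda b)\bigr|\,d\lambda.
\]

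Finally I would invoke the convexity of $|f'|$ to bound $|f'((1-\lambda)a+\lambda b)|\le (1-\lambda)|f'(a)|+\lambda|f'(b)|$, and observe that the $\lambda\leftrightarrow 1-\lambda$ symmetry of the weight $|(1-\lambda)^\alpha-\lambda^\alpha|$ forces the coefficients of $|f'(a)|$ and $|f'(b)|$ to coincide; each equals $\tfrac{1}{2}\int_0^1|(1-\lambda)^\alpha-\lambda^\alpha|\,d\lambda=\frac{1}{\alpha+1}(1-2^{-\alpha})$, after splitting the integral at $\lambda=1/2$. Inserting this back gives exactly the right-hand side of \eqref{Iscan_HH2}. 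The only delicate step is really the integration-by-parts identity: one must be careful that the correct antiderivatives of the kernel pieces are chosen so that the boundary contributions reproduce $[f(a)+f(b)]J^\alpha_{a+}g(b)$; the rest is bookkeeping and a single elementary integral.
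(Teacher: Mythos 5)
Your proof is correct: the integration-by-parts identity with $K(t)=\int_a^t(s-a)^{\alpha-1}g(s)\,ds-\int_t^b(b-s)^{\alpha-1}g(s)\,ds$ holds, the symmetry of $g$ indeed gives $K(t)=-\int_t^{a+b-t}(b-s)^{\alpha-1}g(s)\,ds$ on $[a,(a+b)/2]$ together with $K(a+b-t)=-K(t)$, and the equal-coefficient observation plus $\int_0^1\bigl|(1-\lambda)^\alpha-\lambda^\alpha\bigr|\,d\lambda=\tfrac{2}{\alpha+1}\bigl(1-2^{-\alpha}\bigr)$ reproduces exactly the stated constant. Note, however, that the paper never proves this theorem directly: it is quoted from \.{I}\c{s}can and is recovered only as a limiting special case of the generalized Fej\'er inequality, Theorem \ref{thm3.14}, whose proof runs through Lemma \ref{lemma2} for the symmetrized function $F(x)=f(x)+f(a+b-x)$ with kernel $\G(s)=\frac{s^{\rho-1}}{(b^\rho-s^\rho)^{1-\alpha}}+\frac{s^{\rho-1}}{(s^\rho-a^\rho)^{1-\alpha}}$, then the bound $|F'(t)|\le|f'(a)|+|f'(b)|$, the exact evaluation $\int_a^t\G(s)\,ds-\int_t^b\G(s)\,ds=\frac{2}{\alpha\rho}\bigl[(t^\rho-a^\rho)^\alpha-(b^\rho-t^\rho)^\alpha\bigr]$, and finally $\rho\to1$ combined with the symmetry of $g$. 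Your argument is the $\rho=1$ instance of the same skeleton, organized differently: you keep $f$ itself rather than passing to $F$, and you invoke the symmetry of $g$ early, which collapses $K(t)$ to a single integral and yields the $\|g\|_\infty$ bound immediately; the paper's detour through $F$ and $\G$ buys a version of the inequality that needs no symmetry of $g$ at all (symmetry enters only when specializing to \.{I}\c{s}can's form), at the cost of carrying the reflected term $f(a+b-x)$ throughout. Both routes produce the same constant; yours is self-contained and somewhat more economical for the statement as written.
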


Recently, Katugampola introduced a new fractional integral that generalizes the Riemann--Liouville and the Hadamard fractional integrals into a single form (see  \cite{udita1, udita2, udita3}). The purpose of this paper is to derive Hermite--Hadamard type and Hermite--Hadamard--Fej{\'e}r type inequalities using the Katugampola fractional integrals. Since it is a generalization of Hadamard fractional integral, we can also get the inequalities for Hadamard fractional integral in some cases by just taking limits, while we obtain Riemann-Liouville equivalence by taking limits in all the cases. 

\begin{definition}[\cite{udita2}] 
Let $[a, b] \subset \mathbb{R}$ be a finite interval. Then, the  left- and right-side Katugampola fractional integrals of order $\alpha \,  (>0)$ of $f \in X^p_c(a, b)$ are defined by \cite{udita2},
\[
     ^\rho I^\alpha_{a+}f(x) = \frac{\rho^{1-\alpha}}{\Gamma(\alpha)}\int_{\rm a}^x \frac{t^{\rho-1}}{(x^\rho-t^\rho)^{1-\alpha}} f(t) \, dt  \quad \mbox{and} \quad ^\rho I^\alpha_{b-}f(x) = \frac{\rho^{1-\alpha}}{\Gamma(\alpha)}\int^{\rm b}_x \frac{t^{\rho-1}}{(t^\rho-x^\rho)^{1-\alpha}} f(t) \, dt 
\]
with $a<x<b$ and $\rho>0$, if the integrals exist.
\end{definition}

\begin{theorem}[\cite{udita2}]
Let $\alpha >0$ and $\rho >0$. Then for $x>a$,
\begin{enumerate}
\item $\displaystyle \lim_{\rho \to 1} \,^\rho I^\alpha_{a+}f(x) = J^\alpha_{a+}f(x)$,
\item $\displaystyle \lim_{\rho \to 0^+} \,^\rho I^\alpha_{a+}f(x) = H^\alpha_{a+}f(x)$.
\end{enumerate}
Similar results also hold for right-sided operators.
\end{theorem}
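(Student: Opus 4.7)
The plan is to verify both limits via the Lebesgue dominated convergence theorem (DCT) applied to the integrand
$$
\phi_\rho(t) := \frac{\rho^{1-\alpha}\,t^{\rho-1}}{(x^\rho - t^\rho)^{1-\alpha}}\,f(t), \qquad t \in (a, x).
$$
In each case I would first identify the pointwise limit of $\phi_\rho$ on $(a,x)$, then exhibit a $\rho$-uniform integrable majorant on a one-sided neighborhood of the target value of $\rho$, and finally conclude by DCT.

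For part (1), as $\rho \to 1$ the factors $\rho^{1-\alpha}$ and $t^{\rho-1}$ both tend to $1$, and the mean value theorem gives $x^\rho - t^\rho = \rho\,\xi^{\rho-1}(x-t)$ for some $\xi \in (t,x)$, which converges to $x-t$. Hence $\phi_\rho(t) \to (x-t)^{\alpha-1} f(t)$ pointwise, producing the Riemann--Liouville kernel. For a majorant, restricting $\rho$ to a compact interval $[1-\delta, 1+\delta]$, the quantity $\xi^{\rho-1}$ is bounded above and below by positive constants depending only on $a, x, \delta$; this yields $(x^\rho - t^\rho)^{1-\alpha} \geq C\,(x-t)^{1-\alpha}$ and hence $|\phi_\rho(t)| \leq C'\,(x-t)^{\alpha-1}|f(t)|$, which is integrable on $(a,x)$ by the hypothesis $f \in X^p_c(a,b)$.

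For part (2), the crucial identity is
$$
\frac{\rho^{1-\alpha}}{(x^\rho - t^\rho)^{1-\alpha}} = \left(\frac{x^\rho - t^\rho}{\rho}\right)^{\alpha - 1}, \qquad x^\rho - t^\rho = \rho \int_t^x s^{\rho-1}\,ds.
$$
Writing $s^{\rho-1} = s^{-1}\cdot s^{\rho}$ and using that $s^{\rho} \to 1$ uniformly on $[a,x]$ as $\rho \to 0^+$, one obtains $(x^\rho - t^\rho)/\rho \to \ln(x/t)$ together with two-sided estimates $a^\rho \ln(x/t) \leq (x^\rho-t^\rho)/\rho \leq x^\rho \ln(x/t)$. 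Combined with $t^{\rho-1} \to 1/t$ pointwise and $t^{\rho-1} \leq 1/a$ for small $\rho$, the integrand converges pointwise to $(\ln(x/t))^{\alpha-1} f(t)/t$, i.e.\ the Hadamard kernel, and a uniform majorant of the form $C\,(\ln(x/t))^{\alpha-1}|f(t)|/t$ is available, integrable on $(a,x)$ under the standing hypothesis on $f$.

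The main technical obstacle lies in part (2): one must ensure $(x^\rho - t^\rho)/\rho$ behaves like $\ln(x/t)$ \emph{uniformly} in small $\rho > 0$, in particular as $t \to x^-$ where both the numerator and $\ln(x/t)$ vanish. The integral representation above reduces this to the monotone and uniformly convergent behavior of $s\mapsto s^\rho$ on $[a,x]$, which avoids any delicate expansion. The right-sided statement follows by the symmetric argument with $x^\rho - t^\rho$ replaced by $t^\rho - x^\rho$.
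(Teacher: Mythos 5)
The paper itself gives no proof of this statement: it is quoted verbatim from the cited source \cite{udita2}, where the argument is the pointwise kernel computation. There, the case $\rho\to 1$ is immediate from $\rho^{1-\alpha}t^{\rho-1}(x^\rho-t^\rho)^{\alpha-1}\to (x-t)^{\alpha-1}$, and the case $\rho\to 0^+$ is handled by rewriting the kernel as $\bigl(\tfrac{x^\rho-t^\rho}{\rho}\bigr)^{\alpha-1}t^{\rho-1}$ and evaluating $\lim_{\rho\to 0^+}\tfrac{x^\rho-t^\rho}{\rho}=\ln(x/t)$ by L'H\^{o}pital's rule in $\rho$, the passage of the limit through the integral being left essentially formal. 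Your proposal arrives at the same pointwise limits but by a genuinely different device — the representation $\tfrac{x^\rho-t^\rho}{\rho}=\int_t^x s^{\rho-1}\,ds$, which replaces L'H\^{o}pital and, more importantly, yields the uniform two-sided bound $a^\rho\ln(x/t)\le \tfrac{x^\rho-t^\rho}{\rho}\le x^\rho\ln(x/t)$ — and then you justify the interchange of limit and integral by dominated convergence, which is precisely the step the source glosses over. So your route is a more careful, self-contained version of the standard argument, and the uniform control near $t=x$ that you isolate is exactly what makes the Hadamard limit honest.

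Three small points to tighten. Your constants require $a>0$: for part (2) this is forced anyway by the Hadamard kernel, but in part (1) with $a=0$ the bound on $\xi^{\rho-1}$ ``by constants depending on $a$'' degenerates; split $(a,x)$ at $x/2$, note the kernel is bounded on the left piece and dominate $t^{\rho-1}$ there by $t^{-\delta}$, which is integrable. Also $t^{\rho-1}\le 1/a$ should read $t^{\rho-1}\le \max(1,x^{\rho})/a$, still a constant for small $\rho$. Finally, integrability of the majorants $(x-t)^{\alpha-1}|f(t)|$ and $(\ln(x/t))^{\alpha-1}|f(t)|/t$ does not follow from $f\in X^p_c$ alone for all $\alpha,p$; it is cleanest to say it follows from the assumed existence (finiteness) of $J^\alpha_{a+}f(x)$ and $H^\alpha_{a+}f(x)$, or from continuity of $f$ as in the cited reference.
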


\section{Main Results}

First we generalize Sarikaya's results \cite{sari1} of the Hermite-Hadamard's inequalities for the Katugampola fractional integrals. 

\begin{theorem}\label{th:hh1}
Let $\alpha > 0$ and $\rho>0$. Let $f:[a^\rho, b^\rho] \rightarrow \mathbb{R}$ be a positive function with $0 \leq a < b$ and $f \in X^p_c(a^\rho, b^\rho)$. If $f$ is also a convex function on $[a, b]$, then the following inequalities hold:
\begin{equation}
\label{HH_eq1}
f\left(\frac{a^{\rho}+b^{\rho}}{2}\right)\leq \frac{\rho^\alpha \Gamma(\alpha+1)}{2(b^\rho-a^\rho)^\alpha}\left[ {}^\rho I^\alpha_{a+}f(b^{\rho})+ {}^\rho I^\alpha_{b-}f(a^{\rho})\right] \leq \frac{f(a^{\rho})+f(b^{\rho})}{2}
\end{equation}
where the fractional integrals are considered for the function $f(x^{\rho})$ and evaluated at $a$ and $b$, respectively. 
\end{theorem}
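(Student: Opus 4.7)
The plan is to mirror Sarikaya's proof of Theorem~\ref{sarikaya_thm1} but with a $\rho$-twisted change of variables that converts ordinary integrals in $t$ into Katugampola fractional integrals. The starting point for both inequalities is convexity evaluated on the endpoints $a^{\rho}$ and $b^{\rho}$ (rather than $a,b$), since the midpoint appearing on the left of \eqref{HH_eq1} is $(a^{\rho}+b^{\rho})/2$.

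For the left inequality I would apply midpoint convexity to the pair of arguments $x=ta^{\rho}+(1-t)b^{\rho}$ and $y=(1-t)a^{\rho}+tb^{\rho}$, whose average is $(a^{\rho}+b^{\rho})/2$, to obtain
\[
2f\!\left(\tfrac{a^{\rho}+b^{\rho}}{2}\right)\leq f\!\left(ta^{\rho}+(1-t)b^{\rho}\right)+f\!\left((1-t)a^{\rho}+tb^{\rho}\right).
\]
I would then multiply by $t^{\alpha-1}$ and integrate over $t\in[0,1]$. For the right inequality, I would start instead from the two one-sided convexity bounds $f(ta^{\rho}+(1-t)b^{\rho})\leq tf(a^{\rho})+(1-t)f(b^{\rho})$ and its partner, add them, and again multiply by $t^{\alpha-1}$ and integrate over $[0,1]$; this immediately yields $f(a^{\rho})+f(b^{\rho})$ divided by $\alpha$ on the right-hand side.

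The main calculation, and the step that really does the work, is the substitution $u^{\rho}=ta^{\rho}+(1-t)b^{\rho}$ in
\[
\int_{0}^{1}t^{\alpha-1}f\!\left(ta^{\rho}+(1-t)b^{\rho}\right)dt.
\]
Under it, $t=(b^{\rho}-u^{\rho})/(b^{\rho}-a^{\rho})$ and $dt=-\rho u^{\rho-1}/(b^{\rho}-a^{\rho})\,du$, and the limits swap to $u\in[a,b]$; the integral becomes
\[
\frac{\rho}{(b^{\rho}-a^{\rho})^{\alpha}}\int_{a}^{b}(b^{\rho}-u^{\rho})^{\alpha-1}u^{\rho-1}f(u^{\rho})\,du
=\frac{\rho^{\alpha}\,\Gamma(\alpha)}{(b^{\rho}-a^{\rho})^{\alpha}}\,{}^{\rho}I^{\alpha}_{a+}f(b^{\rho}),
\]
where the last equality just regroups the constants to match the Katugampola definition applied to $g(x)=f(x^{\rho})$ evaluated at $b$. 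The analogous substitution $u^{\rho}=(1-t)a^{\rho}+tb^{\rho}$ produces ${}^{\rho}I^{\alpha}_{b-}f(a^{\rho})$ with the same prefactor.

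Combining both halves and multiplying through by $\alpha/2$ yields \eqref{HH_eq1}. The main technical obstacle is recognizing and executing that $\rho$-weighted substitution cleanly (keeping track of the factor $\rho^{1-\alpha}/\Gamma(\alpha)$ built into the Katugampola integral); aside from this, the argument is a direct generalization of the Riemann--Liouville case of \cite{sari1} and the limit $\rho\to 1$ should visibly recover \eqref{Sari_HH1}.
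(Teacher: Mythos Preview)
Your proposal is correct and follows essentially the same route as the paper: the paper parameterizes the convex combination by $t^{\rho}$ (so $x^{\rho}=t^{\rho}a^{\rho}+(1-t^{\rho})b^{\rho}$) and multiplies by $t^{\alpha\rho-1}$ before integrating, whereas you parameterize by $t$ and multiply by $t^{\alpha-1}$, but these are the same computation up to the substitution $s=t^{\rho}$. Your version is arguably a bit cleaner, and the substitution $u^{\rho}=ta^{\rho}+(1-t)b^{\rho}$ to land on the Katugampola integrals is carried out correctly.
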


\begin{proof} Let $t\in[0, 1]$. Consider $x, y \in [a, b]$, $a \geq 0$, defined by $x^{\rho} = t^{\rho}a^{\rho} +(1-t^{\rho})b^{\rho}$, $y^{\rho} = (1-t^{\rho})a^{\rho} +t^{\rho}b^{\rho}$. Since $f$ is a convex function on $[a, b]$, we have
\[
   f\left(\frac{x^{\rho}+y^{\rho}}{2}\right)\leq \frac{f(x^{\rho})+f(y^{\rho})}{2}
\]
Then we have
\begin{equation}\label{eq1}
  2f\left(\frac{a^{\rho}+b^{\rho}}{2}\right) \leq f(t^{\rho}a^{\rho} +(1-t^{\rho})b^{\rho}) + f((1-t^{\rho})a^{\rho} +t^{\rho}b^{\rho})
\end{equation}
Multiplying both sides of Eq.~(\ref{eq1}) by $t^{\alpha\rho-1}, \, \alpha > 0$ and then integrating the resulting inequality with respect to $t$ over $[a, b]$, we obtain
\begin{align}
  \frac{2}{\alpha\rho}f\left(\frac{a^{\rho}+b^{\rho}}{2}\right) &\leq \int_0^1 t^{\alpha\rho-1}f(t^{\rho}a^{\rho} +(1-t^{\rho})b^{\rho})\,dt + \int_0^1 t^{\alpha\rho-1}f((1-t^{\rho})a^{\rho} +t^{\rho}b^{\rho})\,dt \label{eq10}\\
	&=\int_b^a\left(\frac{b^{\rho}-x^{\rho}}{b^{\rho}-a^{\rho}}\right)^{\alpha-1}f(x^{\rho})\frac{x^{\rho-1}}{a^{\rho}-b^{\rho}}\,dx 
	   + \int_a^b\left(\frac{y^{\rho}-a^{\rho}}{b^{\rho}-a^{\rho}}\right)^{\alpha-1}f(y^{\rho})\frac{y^{\rho-1}}{b^{\rho}-a^{\rho}}\,dy \nonumber\\		
	&= 	\frac{\rho^{\alpha-1} \Gamma(\alpha+1)}{(b^\rho-a^\rho)^\alpha}\left[ {}^\rho I^\alpha_{a+}f(b^{\rho})+ {}^\rho I^\alpha_{b-}f(a^{\rho})\right]. \label{eq11}
\end{align}
This establishes the first inequality. For the proof of the second inequality in Eq.~(\ref{HH_eq1}), we first note that for a convex function $f$, we have
\[
f(t^{\rho}a^{\rho} +(1-t^{\rho})b^{\rho}) \leq t^{\rho}f(a^{\rho})+(1-t^{\rho})f(b^{\rho}),
\]
and
\[
f((1-t^{\rho})a^{\rho} +t^{\rho}b^{\rho}) \leq (1-t^{\rho})f(a^{\rho})+t^{\rho}f(b^{\rho}).
\]
By adding these inequalities, we then have
\begin{equation}\label{eq2}
  f(t^{\rho}a^{\rho} +(1-t^{\rho})b^{\rho}) + f((1-t^{\rho})a^{\rho} +t^{\rho}b^{\rho}) \leq f(a^{\rho})+f(b^{\rho}).
\end{equation}
Multiplying both sides of Eq.~(\ref{eq2}) by $t^{\alpha\rho-1}, \, \alpha > 0$ and then integrating the resulting inequality with respect to $t$ over $[a, b]$, we similarly obtain
\[
   \frac{\rho^{\alpha-1} \Gamma(\alpha)}{(b^\rho-a^\rho)^\alpha}\left[ {}^\rho I^\alpha_{a+}f(b^{\rho})+ {}^\rho I^\alpha_{b-}f(a^{\rho})\right] \leq \frac{f(a^{\rho})+f(b^{\rho})}{\alpha\rho}.
\]
This completes the proof of the Theorem~\ref{th:hh1}. \end{proof}

If the function $f^\prime$ is differentiable, we have the following result.
\begin{theorem}\label{th5} Let $f:[a^\rho, b^\rho] \rightarrow \mathbb{R}$ be a differentiable mapping with $0 \leq a < b$. If $f^\prime$ is differentiable on $(a^\rho, b^\rho)$, then the following inequality holds:
\begin{equation}
   \left|\frac{f(a^\rho)+f(b^\rho)}{2} - \frac{\alpha\rho^\alpha \Gamma(\alpha+1)}{2(b^\rho-a^\rho)^\alpha}\left[ {}^\rho I^\alpha_{a+}f(b^{\rho})+ {}^\rho I^\alpha_{b-}f(a^{\rho})\right] \right| \leq \frac{(b^\rho - a^\rho)^2}{2(\alpha+1)(\alpha+2)}\Big(\alpha+\frac{1}{2^\alpha}\Big)\sup_{\xi\in[a^\rho, b^\rho]}\big|f^{\prime\prime}(\xi)\big|.
\end{equation}
\end{theorem}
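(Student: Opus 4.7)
The plan is to mirror Sarikaya's Theorem~\ref{sarikaya_thm2}, but to push the argument one derivative further by invoking the mean value theorem on $f'$ so that the final bound is in terms of $f''$. The first move is the same change of variables $x^\rho = t^\rho a^\rho + (1-t^\rho)b^\rho$ and $y^\rho = (1-t^\rho)a^\rho + t^\rho b^\rho$ that is used inside the proof of Theorem~\ref{th:hh1}. Writing $A_t := t^\rho a^\rho + (1-t^\rho)b^\rho$ and $B_t := (1-t^\rho)a^\rho + t^\rho b^\rho$, this converts the fractional-integral expression into a constant multiple of $\int_0^1 t^{\alpha\rho-1}\bigl[f(A_t)+f(B_t)\bigr]\,dt$.

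Next I would integrate this $[0,1]$-integral by parts once, taking $u = f(A_t)+f(B_t)$ and $dv = t^{\alpha\rho-1}\,dt$. The boundary term at $t=0$ vanishes because $t^{\alpha\rho}\vert_{t=0}=0$, while the boundary term at $t=1$ produces a constant multiple of $f(a^\rho)+f(b^\rho)$ that matches exactly the outer average in the theorem. Computing $\tfrac{d}{dt}f(A_t) = -\rho t^{\rho-1}(b^\rho-a^\rho)f'(A_t)$ and the analogous identity for $B_t$, the surviving integrand collapses to $\rho t^{\rho-1}(b^\rho-a^\rho)\bigl[f'(B_t)-f'(A_t)\bigr]$, and after collecting constants the left-hand side of the theorem reduces to
\begin{equation*}
\frac{\rho(b^\rho-a^\rho)}{2}\int_0^1 t^{(\alpha+1)\rho-1}\bigl[f'(B_t)-f'(A_t)\bigr]\,dt.
\end{equation*}

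The third step is to trade the bracket for $f''$. Since $B_t - A_t = (2t^\rho-1)(b^\rho-a^\rho)$, the mean value theorem yields $\xi_t\in[a^\rho,b^\rho]$ with $f'(B_t)-f'(A_t)=f''(\xi_t)\,(2t^\rho-1)(b^\rho-a^\rho)$. Taking absolute values and pulling out $\sup_{\xi}|f''(\xi)|$ reduces the whole estimate to the scalar integral $\int_0^1 t^{(\alpha+1)\rho-1}|2t^\rho-1|\,dt$, which, via the substitution $u = t^\rho$, becomes $\tfrac{1}{\rho}\int_0^1 u^\alpha|2u-1|\,du$.

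The principal obstacle I anticipate is the evaluation of this final integral. One must split the domain at $u = 1/2$, integrate the two polynomial pieces separately, and combine the four resulting terms over the common denominator $(\alpha+1)(\alpha+2)2^{\alpha}$; the arithmetic has to collapse to the specific constant $(\alpha + 2^{-\alpha})/[(\alpha+1)(\alpha+2)]$, and verifying this cancellation is the only place where careful bookkeeping is genuinely required. Once that value is in hand, the $1/\rho$ from the substitution cancels against the $\rho$ already present, and one recovers exactly the bound claimed in the theorem.
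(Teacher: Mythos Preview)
Your proposal is correct and follows essentially the same approach as the paper: the paper also rewrites the fractional integrals via the substitutions $x^\rho=t^\rho a^\rho+(1-t^\rho)b^\rho$, $y^\rho=(1-t^\rho)a^\rho+t^\rho b^\rho$, integrates by parts once to obtain the identity with $f'(B_t)-f'(A_t)$, applies the mean value theorem to introduce $f''$, and then evaluates $\int_0^1 t^{\rho(\alpha+1)-1}\lvert 2t^\rho-1\rvert\,dt$ by splitting at $t=2^{-1/\rho}$ (equivalent to your substitution $u=t^\rho$ followed by splitting at $u=1/2$).
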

\begin{proof}Using right side of inequality (\ref{eq10}) and Eq.~(\ref{eq11}), we have 
\[
  \frac{\rho^{\alpha-1} \Gamma(\alpha+1)}{(b^\rho-a^\rho)^\alpha}\left[ {}^\rho I^\alpha_{a+}f(b^{\rho})+ {}^\rho I^\alpha_{b-}f(a^{\rho})\right] = \int_0^1 t^{\alpha\rho-1}f(t^{\rho}a^{\rho} +(1-t^{\rho})b^{\rho})\,dt + \int_0^1 t^{\alpha\rho-1}f((1-t^{\rho})a^{\rho} +t^{\rho}b^{\rho})\,dt.       
\]
By using integration by parts, we then have
\begin{align}\label{eqx}
   \frac{f(a^\rho)+f(b^\rho)}{\alpha\rho} &- \frac{\rho^{\alpha-1}\Gamma(\alpha+1)}{(b^\rho-a^\rho)^\alpha}\left[ {}^\rho I^\alpha_{a+}f(b^{\rho})+ {}^\rho I^\alpha_{b-}f(a^{\rho})\right] \nonumber \\
	&\hspace{1.9cm}= \frac{b^\rho - a^\rho}{\alpha}\int_0^1 t^{\rho(\alpha+1)-1}\left[f^\prime\left((1-t^\rho)a^\rho+t^\rho b^\rho\right)-f^\prime\left(t^\rho a^\rho+(1-t^\rho)b^\rho\right)\right]\,dt.
\end{align}
Using Eq.~(\ref{eqx}) and applying the mean value theorem for the function $f^\prime$, we have
\[
\frac{f(a^\rho)+f(b^\rho)}{\alpha\rho} - \frac{\rho^{\alpha-1}\Gamma(\alpha+1)}{(b^\rho-a^\rho)^\alpha}\left[ {}^\rho I^\alpha_{a+}f(b^{\rho})+ {}^\rho I^\alpha_{b-}f(a^{\rho})\right] = \frac{(b^\rho - a^\rho)^2}{\alpha}\int_0^1 t^{\rho(\alpha+1)-1}\big(2t^\rho-1\big)f^{\prime\prime}(\xi(t))\,dt, 
\]
where $\xi(t)\in(a^\rho, b^\rho)$. This leads us to
\begin{align*}
  \bigg|\frac{f(a^\rho)+f(b^\rho)}{\alpha\rho} &- \frac{\rho^{\alpha-1}\Gamma(\alpha+1)}{(b^\rho-a^\rho)^\alpha}\left[ {}^\rho I^\alpha_{a+}f(b^{\rho})+ {}^\rho I^\alpha_{b-}f(a^{\rho})\right]\bigg| \\
	  &\leq \frac{(b^\rho - a^\rho)^2}{\alpha}\int_0^1 t^{\rho(\alpha+1)-1}\big|2t^\rho-1\big|\big|f^{\prime\prime}(\xi(t))\big|\,dt \\
		&\leq \frac{(b^\rho - a^\rho)^2}{\alpha}\sup_{\xi\in[a^\rho, b^\rho]}\big|f^{\prime\prime}(\xi)\big|\Big[\int_0^{\frac{1}{\sqrt[\rho]{2}}}\big(1-2t^\rho\big) t^{\rho(\alpha+1)-1}\,dt+ \int_{\frac{1}{\sqrt[\rho]{2}}}^1\big(2t^\rho-1\big) t^{\rho(\alpha+1)-1}\,dt\bigg]\\
		&=\frac{(b^\rho - a^\rho)^2}{\alpha\rho(\alpha+1)(\alpha+2)}\Big(\alpha+\frac{1}{2^\alpha}\Big)\sup_{\xi\in[a^\rho, b^\rho]}\big|f^{\prime\prime}(\xi)\big|.
\end{align*}
This gives the desired result.
\end{proof}

If $|f^\prime|$ is also convex on $[a^\rho, b^\rho]$, we then have the following result.

\begin{theorem}\label{th4} Let $f:[a^\rho, b^\rho] \rightarrow \mathbb{R}$ be a differentiable mapping on $(a^\rho, b^\rho)$ with $0 \leq a < b$. If $|f^\prime|$ is convex on $[a^\rho, b^\rho]$, then the following inequality holds:
\begin{equation}
   \left|\frac{f(a^\rho)+f(b^\rho)}{2} - \frac{\alpha\rho^\alpha \Gamma(\alpha+1)}{2(b^\rho-a^\rho)^\alpha}\left[ {}^\rho I^\alpha_{a+}f(b^{\rho})+ {}^\rho I^\alpha_{b-}f(a^{\rho})\right] \right| \leq \frac{b^\rho - a^\rho}{2(\alpha+1)}\left[|f^\prime(a^\rho)|+|f^\prime(b^\rho)|\right].
\end{equation}
\end{theorem}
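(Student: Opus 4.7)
The plan is to reuse the integration-by-parts identity \eqref{eqx} already established inside the proof of Theorem~\ref{th5}, since that identity expresses the exact quantity we wish to estimate, up to a harmless rescaling, as a single integral involving $f^\prime$. Specifically, multiplying both sides of \eqref{eqx} by $\alpha\rho/2$ converts its left-hand side into
\[
\frac{f(a^\rho)+f(b^\rho)}{2}-\frac{\alpha\rho^{\alpha}\Gamma(\alpha+1)}{2(b^\rho-a^\rho)^\alpha}\bigl[\,{}^\rho I^\alpha_{a+}f(b^\rho)+{}^\rho I^\alpha_{b-}f(a^\rho)\bigr],
\]
which is precisely what sits inside the absolute value in the conclusion, while the right-hand side becomes $\frac{\rho(b^\rho-a^\rho)}{2}$ times the same integral. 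This bookkeeping step is the entire ``setup'' of the proof.

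With the identity in hand, I would take absolute values on both sides and push the absolute value inside the integral via the triangle inequality, obtaining an integrand bounded above by
\[
t^{\rho(\alpha+1)-1}\bigl(|f^\prime((1-t^\rho)a^\rho+t^\rho b^\rho)|+|f^\prime(t^\rho a^\rho+(1-t^\rho)b^\rho)|\bigr).
\]
The key observation is that each argument of $f^\prime$ above is a convex combination of the endpoints $a^\rho$ and $b^\rho$ with weights $(1-t^\rho,t^\rho)$ and $(t^\rho,1-t^\rho)$, respectively. Applying the convexity of $|f^\prime|$ to each term and adding, the $t$-dependent weights sum to unity and produce the clean pointwise majorant $|f^\prime(a^\rho)|+|f^\prime(b^\rho)|$, which is free of $t$.

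Once the integrand has been separated in this way, only the elementary computation $\int_0^1 t^{\rho(\alpha+1)-1}\,dt = \tfrac{1}{\rho(\alpha+1)}$ remains, and collecting the constants $\frac{\rho(b^\rho-a^\rho)}{2}\cdot\frac{1}{\rho(\alpha+1)}$ gives exactly the stated bound $\frac{b^\rho-a^\rho}{2(\alpha+1)}\bigl[|f^\prime(a^\rho)|+|f^\prime(b^\rho)|\bigr]$. I do not anticipate a genuine obstacle: the structural work was done in Theorem~\ref{th5}, and what remains follows the familiar convexity-plus-triangle-inequality template used to prove \eqref{Sari_HH2} in the Riemann--Liouville case. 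The only subtlety worth flagging is that the hypothesis is convexity of $|f^\prime|$ (not of $f$), but this is exactly what is required to bound the two $f^\prime$ values pointwise in $t$, after which nothing deeper than integrating a power of $t$ is used.
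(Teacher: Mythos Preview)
Your proposal is correct and follows essentially the same approach as the paper: start from identity~\eqref{eqx}, take absolute values and apply the triangle inequality to split the two $f'$ terms, use convexity of $|f'|$ so that the weights collapse to $|f'(a^\rho)|+|f'(b^\rho)|$, and evaluate $\int_0^1 t^{\rho(\alpha+1)-1}\,dt$. The only cosmetic difference is that you renormalize by $\alpha\rho/2$ at the outset while the paper carries the factor $\alpha\rho$ through and rescales at the very end.
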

\begin{proof}By using Eq.~(\ref{eqx}), the triangle inequality and the convexity of $|f^\prime|$, we get
\begin{align*}
  \Bigg|\frac{f(a^\rho)+f(b^\rho)}{\alpha\rho} &- \frac{\rho^{\alpha-1}\Gamma(\alpha+1)}{(b^\rho-a^\rho)^\alpha}\left[ {}^\rho I^\alpha_{a+}f(b^{\rho})+ {}^\rho I^\alpha_{b-}f(a^{\rho})\right]\Bigg|  \\
	      &\leq \frac{b^\rho - a^\rho}{\alpha}\int_0^1 t^{\rho(\alpha+1)-1}\Big|f^\prime\left((1-t^\rho)a^\rho+t^\rho b^\rho\right)-f^\prime\left(t^\rho a^\rho+(1-t^\rho)b^\rho\right)\Big|\,dt\\
				&\leq \frac{b^\rho - a^\rho}{\alpha}\int_0^1 t^{\rho(\alpha+1)-1}\Big[\left|f^\prime\left((1-t^\rho)a^\rho+t^\rho b^\rho\right)\right|+\left|f^\prime\left(t^\rho a^\rho+(1-t^\rho)b^\rho\right)\right|\Big]\,dt\\
				&\leq \frac{b^\rho - a^\rho}{\alpha}\int_0^1 t^{\rho(\alpha+1)-1}\Big[(1-t^\rho)|f^\prime(a^\rho)|+t^\rho|f^\prime(b^\rho)|+t^\rho|f^\prime( a^\rho)|+(1-t^\rho)|f^\prime(b^\rho)|\Big]\,dt \\
				&= \frac{b^\rho - a^\rho}{\alpha}\Big[|f^\prime(a^\rho)|+f^\prime(b^\rho)|\Big]\int_0^1 t^{\rho(\alpha+1)-1}\,dt\\
				&= \frac{b^\rho - a^\rho}{\alpha\rho(\alpha+1)}\Big[|f^\prime(a^\rho)|+f^\prime(b^\rho)|\Big].
\end{align*}
This establishes the result. 
\end{proof} 
Another more strict inequality can be obtain by using the following lemma.
\begin{lemma}\label{lem1} Let $f:[a^\rho, b^\rho] \rightarrow \mathbb{R}$ be a differentiable mapping on $(a^\rho, b^\rho)$ with $0 \leq a < b$. Then the following equality holds if the fractional integrals exist:
\begin{equation}
   \frac{f(a^\rho)+f(b^\rho)}{2} - \frac{\alpha\rho^\alpha \Gamma(\alpha+1)}{2(b^\rho-a^\rho)^\alpha}\left[ {}^\rho I^\alpha_{a+}f(b^{\rho})+ {}^\rho I^\alpha_{b-}f(a^{\rho})\right] = \frac{b^\rho - a^\rho}{2}\int_0^1\left[(1-t^\rho)^\alpha - t^{\rho\alpha}\right]t^{\rho-1}f^\prime\big(t^\rho a^\rho+(1-t^\rho)b^\rho\big)\,dt.
\end{equation}
\end{lemma}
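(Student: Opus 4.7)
The plan is to prove this identity by starting from the right-hand side and reducing it to the left-hand side via integration by parts, followed by a change of variables that matches the residual integrals against the Katugampola fractional integrals.

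First I would split the right-hand side as $\frac{b^\rho - a^\rho}{2}(J_1 - J_2)$, where
\[
J_1 = \int_0^1 (1-t^\rho)^\alpha\, t^{\rho-1} f'(\phi(t))\,dt, \qquad J_2 = \int_0^1 t^{\rho\alpha}\, t^{\rho-1} f'(\phi(t))\,dt,
\]
with $\phi(t) := t^\rho a^\rho + (1-t^\rho) b^\rho$. The key observation is that $\phi'(t) = -\rho t^{\rho-1}(b^\rho - a^\rho)$, so that $t^{\rho-1} f'(\phi(t))\,dt = -\frac{1}{\rho(b^\rho - a^\rho)}\, d[f(\phi(t))]$. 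With this in hand, I would apply integration by parts to $J_1$ with $u = (1-t^\rho)^\alpha$ and $dv = t^{\rho-1} f'(\phi(t))\,dt$ (so $v = -f(\phi(t))/[\rho(b^\rho - a^\rho)]$), and similarly to $J_2$ with $u = t^{\rho\alpha}$. Because $u$ vanishes at the opposite endpoint in each case, only one boundary value survives per integral: $J_1$ contributes $f(\phi(0)) = f(b^\rho)$ and $J_2$ contributes $-f(\phi(1)) = -f(a^\rho)$, each with an explicit constant.

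Each residual integral is a constant times $\int_0^1 (1-t^\rho)^{\alpha-1} t^{\rho-1} f(\phi(t))\,dt$ or $\int_0^1 t^{\rho\alpha-1} f(\phi(t))\,dt$. To convert these into Katugampola fractional integrals I would use the substitution $x^\rho = \phi(t)$ already employed in the proof of Theorem~\ref{th:hh1}: this sends $1 - t^\rho$ to $(x^\rho - a^\rho)/(b^\rho - a^\rho)$, $t^\rho$ to $(b^\rho - x^\rho)/(b^\rho - a^\rho)$, and $t^{\rho-1}\,dt$ to $x^{\rho-1}\,dx/(a^\rho - b^\rho)$. Up to signs and the Jacobian, the first residual reproduces $\int_a^b (x^\rho - a^\rho)^{\alpha - 1} x^{\rho - 1} f(x^\rho)\,dx = \rho^{\alpha-1}\Gamma(\alpha)\,{}^\rho I^\alpha_{b-} f(a^\rho)$, and the second reproduces the analogous expression for ${}^\rho I^\alpha_{a+} f(b^\rho)$. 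Assembling the boundary and residual pieces, multiplying by the overall prefactor $(b^\rho - a^\rho)/2$, and using $\alpha \Gamma(\alpha) = \Gamma(\alpha+1)$ will then yield the stated identity.

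The main obstacle will be careful sign and constant bookkeeping: the orientation reversal in the change of variables (which swaps $t=0,1$ with $x=b,a$), the negative sign in $a^\rho - b^\rho = -(b^\rho - a^\rho)$ appearing in the Jacobian, and the sign flips introduced by integration by parts all have to line up so that the two residuals combine with the correct relative sign to form the single bracket ${}^\rho I^\alpha_{a+} f(b^\rho) + {}^\rho I^\alpha_{b-} f(a^\rho)$ on the left, and so that the final coefficients of this bracket and of $(f(a^\rho) + f(b^\rho))/2$ emerge with the correct powers of $\rho$ and the expected factor involving $\alpha$.
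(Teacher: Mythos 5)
Your proposal is essentially the paper's own proof: the same splitting into the two integrals, the same integration by parts with $v=-f\big(t^\rho a^\rho+(1-t^\rho)b^\rho\big)/\big[\rho(b^\rho-a^\rho)\big]$, and the same substitution $x^\rho=t^\rho a^\rho+(1-t^\rho)b^\rho$ converting the residual integrals into $\rho^{\alpha-1}\Gamma(\alpha)\,{}^\rho I^\alpha_{b-}f(a^\rho)$ and $\rho^{\alpha-1}\Gamma(\alpha)\,{}^\rho I^\alpha_{a+}f(b^\rho)$, so the approach is correct. One remark on the bookkeeping you defer to the end: carried out exactly, the boundary terms produce $\frac{f(a^\rho)+f(b^\rho)}{2\rho}$ and the bracket acquires the coefficient $\frac{\rho^{\alpha-1}\Gamma(\alpha+1)}{2(b^\rho-a^\rho)^\alpha}$ (which reduces to the Riemann--Liouville identity of Sarikaya et al.\ as $\rho\to1$), rather than the displayed $\frac{f(a^\rho)+f(b^\rho)}{2}$ and $\frac{\alpha\rho^\alpha\Gamma(\alpha+1)}{2(b^\rho-a^\rho)^\alpha}$; this mismatch of constant factors (an extra $\alpha$ and powers of $\rho$) is already present in the printed statement and in the paper's own proof, not a defect of your method.
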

\begin{proof}This can be proved using a similar line of argument as in the proof of Lemma 2 in \cite{sari1}. To that end, by integration by parts, first note that
\begin{align*}
   \int_0^1 \big(1-t^\rho)^\alpha t^{\rho-1}&f^\prime\big(t^\rho a^\rho+(1-t^\rho)b^\rho\big)\,dt \\
	   & = \frac{(1-t^\rho)^\alpha f\big(t^\rho a^\rho+(1-t^\rho)b^\rho\big)}{\rho(a^\rho-b^\rho)}\bigg|_0^1 + \frac{\alpha}{a^\rho-b^\rho}\int_0^1(1-t^\rho)^{\alpha-1}t^{\rho-1}f\big(t^\rho a^\rho+(1-t^\rho)b^\rho\big)\,dt\\
		 & = \frac{f(b^\rho)}{\rho(b^\rho-a^\rho)}-\frac{\alpha}{b^\rho-a^\rho}\int_b^a\Big(\frac{x^\rho-a^\rho}{b^\rho-a^\rho}\Big)^{\alpha-1}\cdot\frac{x^{\rho-1}}{a^\rho-b^\rho}\,dx \\
		& = \frac{f(b^\rho)}{\rho(b^\rho-a^\rho)} - \frac{\alpha\rho^{\alpha-1} \Gamma(\alpha+1)}{(b^\rho-a^\rho)^{\alpha+1}}{}^\rho I^\alpha_{b-}f(x^{\rho})\bigg|_{x=a}.
\end{align*}
Similarly, we can also prove that
\[
   -\int_0^1 t^{\rho\alpha}\cdot t^{\rho-1}f^\prime\big(t^\rho a^\rho+(1-t^\rho)b^\rho\big)\,dt = \frac{f(a^\rho)}{\rho(b^\rho-a^\rho)} - \frac{\alpha\rho^{\alpha-1} \Gamma(\alpha+1)}{(b^\rho-a^\rho)^{\alpha+1}}{}^\rho I^\alpha_{a+}f(x^{\rho})\bigg|_{x=b}.
\]
These two results lead to the proof of Lemma~\ref{lem1}.
\end{proof}
With the help of this lemma, we have the following result. 

\begin{theorem}\label{th4}Let $f:[a^\rho, b^\rho] \rightarrow \mathbb{R}$ be a differentiable mapping on $(a^\rho, b^\rho)$ with $0 \leq a < b$. If $|f^\prime|$ is convex on $[a^\rho, b^\rho]$, then the following inequality holds:
\begin{equation}
   \bigg|\frac{f(a^\rho)+f(b^\rho)}{2} - \frac{\alpha\rho^\alpha \Gamma(\alpha+1)}{2(b^\rho-a^\rho)^\alpha}\left[ {}^\rho I^\alpha_{a+}f(b^{\rho})+ {}^\rho I^\alpha_{b-}f(a^{\rho})\right]\bigg| \leq \frac{b^\rho - a^\rho}{2\rho(\alpha+1)}\Big(1-\frac{1}{2^\alpha}\Big)\big[|f^\prime(a^\rho)|+|f^\prime(b^\rho)|\big].
\end{equation}
\end{theorem}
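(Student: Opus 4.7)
The plan is to apply Lemma~\ref{lem1} directly, take absolute values under the integral sign, use the convexity hypothesis on $|f'|$ to bound the integrand, and then reduce the remaining integral to an elementary one via the substitution $u=t^\rho$.

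First I would start from the identity in Lemma~\ref{lem1} and use the triangle inequality to get
\[
\text{LHS} \;\le\; \frac{b^\rho-a^\rho}{2}\int_0^1 \bigl|(1-t^\rho)^\alpha - t^{\rho\alpha}\bigr|\, t^{\rho-1}\,\bigl|f'(t^\rho a^\rho+(1-t^\rho)b^\rho)\bigr|\,dt.
\]
The convexity of $|f'|$ yields $|f'(t^\rho a^\rho+(1-t^\rho)b^\rho)| \le t^\rho|f'(a^\rho)|+(1-t^\rho)|f'(b^\rho)|$. To deal with the absolute value of the kernel, I would split the interval at the point where $(1-t^\rho)^\alpha=t^{\rho\alpha}$, namely $t^\rho=\tfrac12$; on $[0,2^{-1/\rho}]$ the kernel equals $(1-t^\rho)^\alpha - t^{\rho\alpha}$ and on $[2^{-1/\rho},1]$ it equals $t^{\rho\alpha}-(1-t^\rho)^\alpha$.

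Next, in both pieces I would perform the substitution $u=t^\rho$, $du=\rho t^{\rho-1}dt$, which pushes the factor $t^{\rho-1}$ into $du/\rho$ and transforms the two integrals into
\[
I_1=\tfrac{1}{\rho}\!\int_0^{1/2}\!\!\bigl[(1-u)^\alpha-u^\alpha\bigr]\bigl[u|f'(a^\rho)|+(1-u)|f'(b^\rho)|\bigr]du,
\]
\[
I_2=\tfrac{1}{\rho}\!\int_{1/2}^1\!\!\bigl[u^\alpha-(1-u)^\alpha\bigr]\bigl[u|f'(a^\rho)|+(1-u)|f'(b^\rho)|\bigr]du.
\]
The key trick, and the step where care is needed, is the change of variables $v=1-u$ inside $I_2$: this flips the domain onto $[0,1/2]$ and swaps the roles of $u$ and $1-u$ inside the linear combination. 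Adding $I_1$ to the transformed $I_2$ then collapses the bracket into the symmetric sum $|f'(a^\rho)|+|f'(b^\rho)|$, leaving only the scalar integral $\int_0^{1/2}[(1-u)^\alpha-u^\alpha]du$.

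Finally I would evaluate this elementary integral to obtain $\frac{1}{\alpha+1}\bigl(1-2^{-\alpha}\bigr)$, which after multiplication by the prefactor $(b^\rho-a^\rho)/2$ and the $1/\rho$ from the substitution produces exactly the claimed bound. The main obstacle is conceptual rather than computational: recognizing that the symmetrization $u\mapsto 1-u$ on the second piece is what allows the weights $t^\rho$ and $1-t^\rho$ in front of $|f'(a^\rho)|$ and $|f'(b^\rho)|$ to merge into a factor depending only on $|f'(a^\rho)|+|f'(b^\rho)|$; without this step the two boundary values would contribute asymmetrically and the clean constant $1-2^{-\alpha}$ would not appear.
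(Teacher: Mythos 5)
Your proposal is correct and follows essentially the same route as the paper: both start from Lemma~\ref{lem1}, apply the triangle inequality and the convexity of $|f'|$, and split the integral at $t^\rho=\tfrac12$. The only difference is cosmetic — the paper evaluates the resulting pieces term by term in the $t$ variable, whereas your substitution $u=t^\rho$ followed by the symmetrization $v=1-u$ collapses the computation more cleanly to $\int_0^{1/2}[(1-u)^\alpha-u^\alpha]\,du=\tfrac{1}{\alpha+1}\bigl(1-2^{-\alpha}\bigr)$, giving the same constant.
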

\begin{proof}Using Lemma~\ref{lem1} and the convexity of $|f^\prime|$, we have
\begin{align*}
   \bigg|\frac{f(a^\rho)+f(b^\rho)}{2} &- \frac{\alpha\rho^\alpha \Gamma(\alpha+1)}{2(b^\rho-a^\rho)^\alpha}\left[ {}^\rho I^\alpha_{a+}f(b^{\rho})+ {}^\rho I^\alpha_{b-}f(a^{\rho})\right]\bigg| \\
	      &\hspace{-1cm}\leq \frac{b^\rho - a^\rho}{2}\int_0^1 t^{\rho-1}\left|(1-t^\rho)^\alpha - t^{\rho\alpha}\right|\big|f^\prime\big(t^\rho a^\rho+(1-t^\rho)b^\rho\big)\big|\,dt\\
	      &\hspace{-1cm}\leq \frac{b^\rho - a^\rho}{2}\int_0^1 t^{\rho-1}\left|(1-t^\rho)^\alpha - t^{\rho\alpha}\right|\big[t^\rho\big|f^\prime(a^\rho)\big|+(1-t^\rho)\big|f^\prime(b^\rho)\big|\big]\,dt \\
				&\hspace{-1cm}\leq \frac{b^\rho - a^\rho}{2}\bigg\{\int_0^{\frac{1}{\sqrt[\rho]{2}}}t^{\rho-1}\left[(1-t^\rho)^\alpha - t^{\rho\alpha}\right]\big[t^\rho\big|f^\prime(a^\rho)\big|+(1-t^\rho)\big|f^\prime(b^\rho)\big|\big]\,dt \\
				&\hspace{2cm}+ \int^1_{\frac{1}{\sqrt[\rho]{2}}}t^{\rho-1}\left[t^{\rho\alpha}-(1-t^\rho)^\alpha\right]\big[t^\rho\big|f^\prime(a^\rho)\big|+(1-t^\rho)\big|f^\prime(b^\rho)\big|\big]\,dt\bigg\} \\
				&\hspace{-1cm} = \int_0^1 g(t)\,dt -2\int_0^{\frac{1}{\sqrt[\rho]{2}}} g(t)\,dt, \quad \mbox{where} \; g(t)= t^{\rho-1}\left[t^{\rho\alpha}-(1-t^\rho)^\alpha\right]\big[t^\rho\big|f^\prime(a^\rho)\big|+(1-t^\rho)\big|f^\prime(b^\rho)\big|\big] \\
				&\hspace{-1cm} = \frac{1}{\rho}\Big[\big|f^\prime(a^\rho)\big|-\big|f^\prime(b^\rho)\big|\Big]\cdot\frac{\alpha}{(\alpha+1)(\alpha+2)} \\
				& \hspace{-.4cm} -2\bigg\{ \big(\big|f^\prime(a^\rho)\big|+\big|f^\prime(b^\rho)\big|\big)\Big[\frac{(\frac{1}{2})^{\alpha+2}}{\alpha+1}+\frac{(\frac{1}{2})^{\alpha+2}}{\alpha+2}+\frac{(\frac{1}{2})^{\alpha+2}}{(\alpha+1)(\alpha+2)}\Big] -\frac{\big|f^\prime(a^\rho)\big|}{(\alpha+1)(\alpha+2)}-\frac{\big|f^\prime(b^\rho)\big|}{\alpha+2}\bigg\}\\
				&\hspace{-1cm} = \frac{b^\rho - a^\rho}{2\rho(\alpha+1)}\Big(1-\frac{1}{2^\alpha}\Big)\big[|f^\prime(a^\rho)|+|f^\prime(b^\rho)|\big].			
\end{align*}
This completes the proof of the theorem. \end{proof}

When $\rho=1$, Theorem~\ref{th4} will reduce to Theorem~3 of \cite{sari1}. If $1$ is in the domain of $f$ and $f$ is differentiable at $1$, then we have the following special case when $\rho \rightarrow 0^+$. 
\[
   \bigg|f(1) - \frac{\alpha\Gamma(\alpha+1)}{2(\ln\frac{b}{a})^\alpha}\Big[H_{a+}^\alpha f(1)+H_{b-}^\alpha f(1)\Big]\bigg| \leq \frac{1}{(\alpha+1)\ln \frac{b}{a}}\Big(1-\frac{1}{2^\alpha}\Big)\big|f^\prime(1)\big|.
\]
where $H_{a+}^\alpha(\cdot)$ and $H_{b-}^\alpha(\cdot)$ are Hadamard fractional integrals defined in Eq.~(\ref{Had1}).
\section{Further inequalities}

In this section, we generalize the results of Jleli et al. \cite{HH14} further. Let $f:[a, b] \to \R$ be a given function, where $0<a<b<\infty$. For the rest of the paper, we define $F(x) := f(x) + f(a+b-x)$. Then it is easy to show that if $f(x)$ is convex on $[a, b]$, $F(x)$ is also convex. The function $F$ has several interesting properties, especially, 
\begin{itemize}
\item $F(x)$ is symmetric to $(a+b)/2$;
\item $F(a) = F(b) = f(a)+f(b)$;
\item $F(\frac{a+b}{2}) = 2f(\frac{a+b}{2})$. 
\end{itemize}
%

\subsection{Hermite-Hadamard type inequalities}
Hermite--Hadamard inequalities can be generalized via Katugampola fractional integrals as follows.
\begin{theorem} 
\label{HH_thm1}
If $f$ is a convex function on $[a, b]$ and $f\in L[a, b]$. Then $F(x)$ is also integrable, and the following inequalities hold     
\begin{equation}
\label{HH_eqn1}
F\Big(\frac{a+b}{2}\Big)\leq \frac{\rho^\alpha \Gamma(\alpha+1)}{2(b^\rho-a^\rho)^\alpha}\left[ \,^\rho I^\alpha_{a+}F(b)+ \,^\rho I^\alpha_{b-}F(a)\right] \leq \frac{F(a)+F(b)}{2}
\end{equation}
with $\alpha > 0$ and $\rho>0$.
\end{theorem}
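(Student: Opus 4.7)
The plan is to exploit the three special properties of $F$ listed immediately before the statement (symmetry about $(a+b)/2$, the identity $F(a)=F(b)=f(a)+f(b)$, and $F((a+b)/2)=2f((a+b)/2)$), rather than re-run the two-substitution argument of Theorem~\ref{th:hh1}. Because $F$ is convex (as a sum of two convex functions) and symmetric, it attains its minimum on $[a,b]$ at the midpoint and its common maximum at the two endpoints, so the Hermite--Hadamard bounds degenerate to pointwise estimates that can be integrated directly against the Katugampola kernels.

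First I would compute the total mass of the two kernels. The substitution $u=b^\rho-t^\rho$ in the left-sided kernel, and $u=t^\rho-a^\rho$ in the right-sided one, yield
\[
{}^\rho I^\alpha_{a+}[\mathbf 1](b)\;=\;{}^\rho I^\alpha_{b-}[\mathbf 1](a)\;=\;\frac{(b^\rho-a^\rho)^\alpha}{\rho^\alpha\,\Gamma(\alpha+1)},
\]
so the normalizing constant $\rho^\alpha\Gamma(\alpha+1)/[2(b^\rho-a^\rho)^\alpha]$ appearing in \eqref{HH_eqn1} is exactly what is needed to turn $\tfrac12\bigl[{}^\rho I^\alpha_{a+}+{}^\rho I^\alpha_{b-}\bigr]$ into a probability average on $[a,b]$.

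Next I would establish the pointwise sandwich
\[
F\Bigl(\tfrac{a+b}{2}\Bigr)\;\le\;F(t)\;\le\;\tfrac{F(a)+F(b)}{2}\qquad(t\in[a,b]).
\]
The lower inequality follows by applying convexity of $F$ at the pair $t$ and $a+b-t$, whose midpoint is $(a+b)/2$ and whose $F$-values coincide by symmetry, so Jensen gives $F((a+b)/2)\le \tfrac12\bigl(F(t)+F(a+b-t)\bigr)=F(t)$. The upper inequality is obtained by writing $t=(1-\lambda)a+\lambda b$ for some $\lambda\in[0,1]$, applying convexity to get $F(t)\le(1-\lambda)F(a)+\lambda F(b)$, and noting that $F(a)=F(b)$ collapses the right-hand side to $(F(a)+F(b))/2$. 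Integrability of $F$ on $[a,b]$ is inherited from $f\in L[a,b]$ via $F(x)=f(x)+f(a+b-x)$.

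Finally I would multiply the sandwich by each of the non-negative kernels $t^{\rho-1}(b^\rho-t^\rho)^{\alpha-1}$ and $t^{\rho-1}(t^\rho-a^\rho)^{\alpha-1}$, integrate over $[a,b]$, add the two resulting double inequalities, and divide through by $2(b^\rho-a^\rho)^\alpha/[\rho^\alpha\Gamma(\alpha+1)]$; the normalization from the first step then delivers \eqref{HH_eqn1} exactly as stated. The only mildly technical point is the kernel-mass computation in the first display; there is no genuine obstacle, and the argument is deliberately shorter than that of Theorem~\ref{th:hh1} because the auxiliary function $F$ was engineered so that convexity plus symmetry would make the midpoint and endpoint bounds immediate.
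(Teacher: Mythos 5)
Your proposal is correct and follows essentially the same route as the paper: both establish the pointwise sandwich $F\bigl(\tfrac{a+b}{2}\bigr)\le F(t)\le\tfrac{F(a)+F(b)}{2}$ on $[a,b]$ and then integrate it against the two Katugampola kernels before adding and normalizing. Your derivation of the sandwich from convexity plus symmetry of $F$ (rather than from two convexity inequalities for $f$) and your one-time computation of the kernel mass ${}^\rho I^\alpha_{a+}[\mathbf 1](b)={}^\rho I^\alpha_{b-}[\mathbf 1](a)=(b^\rho-a^\rho)^\alpha/\bigl(\rho^\alpha\Gamma(\alpha+1)\bigr)$ are only cosmetic streamlinings of the paper's substitution argument.
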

\begin{proof}
Since $f(x)$ is a convex function on $[a, b]$, we have for $x, y \in [a, b]$ 
\[f\Big(\frac{x+y}{2}\Big)\leq \frac{f(x)+f(y)}{2}.\]
Set $x = ta+(1-t)b$ and $y = (1-t)a+tb$, then 
\begin{equation*}
2f\Big(\frac{a+b}{2}\Big) \leq f(ta+(1-t)b) + f((1-t)a+tb),
\end{equation*}
Using the notation of $F(x)$, we have
\begin{equation}
\label{eqnconvex}
F\Big(\frac{a+b}{2}\Big)\leq F((1-t)a+tb).
\end{equation}
Multiplying both sides of \eqref{eqnconvex} by 
\begin{equation}
\label{factor1}
\frac{((1-t)a+tb)^{\rho-1}}{[b^\rho - ((1-t)a+tb)^\rho]^{1-\alpha}}
\end{equation} 
and integrating the resulting inequality with respect to $t$ over $[0, 1]$, we get 
\begin{eqnarray*}
F\Big(\frac{a+b}{2}\Big) \frac{(b^\rho-a^\rho)^\alpha}{\alpha\rho(b-a)} &\leq& \int_0^1 \frac{((1-t)a+tb)^{\rho-1}}{[b^\rho - ((1-t)a+tb)^\rho]^{1-\alpha}} F((1-t)a+tb) dt\\
&=& \int_a^b \frac{u^{\rho-1}}{(b^\rho - u^\rho)^{1-\alpha}}F(u)\frac{du}{b-a}\\
&=& \frac{\Gamma (\alpha)\rho^{\alpha-1}}{b-a} \,^\rho I^\alpha_{a+}F(b)
\end{eqnarray*}
$i.e.$ 
\begin{equation} 
\label{eqn1}
F\Big(\frac{a+b}{2}\Big) \leq \frac{\Gamma (\alpha+1)\rho^{\alpha}}{(b^\rho-a^\rho)^\alpha} \,^\rho I^\alpha_{a+}F(b).
\end{equation}
Similarly, multiplying both sides of \eqref{eqnconvex} by 
\begin{equation}
\label{factor2}
\frac{((1-t)a+tb)^{\rho-1}}{[ ((1-t)a+tb)^\rho-a^\rho]^{1-\alpha}},
\end{equation} 
and integrating the resulting inequality over $[0, 1]$, we get 
\begin{equation} 
\label{eqn2}
F\Big(\frac{a+b}{2}\Big) \leq \frac{\Gamma (\alpha+1)\rho^{\alpha}}{(b^\rho-a^\rho)^\alpha} \,^\rho I^\alpha_{b-}F(a).
\end{equation}
By adding inequalities \eqref{eqn1} and \eqref{eqn2}, we obtain 
\[F\Big(\frac{a+b}{2}\Big)\leq \frac{\rho^\alpha \Gamma(\alpha+1)}{2(b^\rho-a^\rho)^\alpha}\left[ \,^\rho I^\alpha_{a+}F(b)+ \,^\rho I^\alpha_{b-}F(a)\right].\]
The first inequality of \eqref{HH_eqn1} is proved.

For the second part, since $f$ is a convex function, then for $t \in [0, 1]$, we have 
\begin{equation*}
f(ta+(1-t)b) + f((1-t)a+tb)\leq f(a) + f(b).
\end{equation*}
Using the notation of $F(x)$, we then have
\begin{equation}
\label{eqn3}
F((1-t)a+tb) \leq \frac{F(a)+F(b)}{2}.
\end{equation}
Multiplying both sides of \eqref{eqn3} by factor \eqref{factor1} and integrating the resulting inequality over $[0, 1]$ with respect to $t$, we get 
\begin{equation*}
\frac{\Gamma{(\alpha)} \rho^{\alpha-1}}{b-a}\,^\rho I^\alpha_{a+}F(b) \leq \frac{(b^\rho-a^\rho)^\alpha}{\alpha\rho(b-a)} \frac{F(a)+F(b)}{2} 
\end{equation*}
$i.e.$
\begin{equation}
\label{eqn4}
\frac{\rho^\alpha \Gamma(\alpha+1)}{(b^\rho-a^\rho)^\alpha} \,^\rho I^\alpha_{a+}F(b) \leq \frac{F(a)+F(b)}{2} 
\end{equation}
Similarly, multiplying both sides of inequality \eqref{eqn3} by factor \eqref{factor2} and integrating the resulting inequality over $[0, 1]$, we get 
\begin{equation}
\label{eqn5}
\frac{\rho^\alpha \Gamma(\alpha+1)}{(b^\rho-a^\rho)^\alpha} \,^\rho I^\alpha_{b-}F(a) \leq \frac{F(a)+F(b)}{2} 
\end{equation}
By adding inequality \eqref{eqn4} and \eqref{eqn5}, we obtain 
\[\frac{\rho^\alpha \Gamma(\alpha+1)}{2(b^\rho-a^\rho)^\alpha}\left[ \,^\rho I^\alpha_{a+}F(b)+ \,^\rho I^\alpha_{b-}F(a)\right] \leq \frac{F(a)+F(b)}{2}.\]
The proof is completed.
\end{proof}

\begin{remark}
Theorem \ref{HH_thm1} is a generalization of Hermite-Hadamard inequality. 
\begin{enumerate}
\item Letting $\rho \to 1$ in \eqref{HH_eqn1} and noticing that \begin{eqnarray*}
\lim_{\rho\to 1}\,^\rho I^\alpha_{a+}F(b) &=& \frac{1}{\Gamma(\alpha)} \int_a^b (b-t)^{\alpha-1} F(t) dt = J^\alpha_{a+}f(b) + J^\alpha_{b-}f(a),\\
\lim_{\rho\to 1}\,^\rho I^\alpha_{b-}F(a) &=& \frac{1}{\Gamma(\alpha)} \int_a^b (t-a)^{\alpha-1} F(t) dt = J^\alpha_{b-}f(a) + J^\alpha_{a+}f(b),
\end{eqnarray*}
we immediately get the Riemann-Liouville form of Hermite-Hadamard inequality \eqref{Sari_HH1} in Theorem \ref{sarikaya_thm1}.
\item If $f$ is also symmetric to $\frac{a+b}{2}$, then $F(x) = f(x)+ f(a+b-x) = 2f(x)$, and the inequality \eqref{HH_eqn1} becomes
\begin{equation}
f\Big(\frac{a+b}{2}\Big)\leq \frac{\rho^\alpha \Gamma(\alpha+1)}{2(b^\rho-a^\rho)^\alpha}\left[ \,^\rho I^\alpha_{a+}f(b)+ \,^\rho I^\alpha_{b-}f(a)\right] \leq \frac{f(a)+f(b)}{2}.
\end{equation} 
We can get inequality \eqref{Sari_HH1} directly by letting $\rho \to 1$.
\end{enumerate}
\end{remark}

On the other hand, letting $\rho \to 0^+$ in inequality \eqref{HH_eqn1}, we get the following Hermite-Hadamard inequality for Hadamard fractional integrals.

\begin{corollary} 
If $f$ is a convex function on $[a, b]$ and $f\in L[a, b]$. Then $F(x)$ is also convex and  $F\in L[a, b],$ and the following equalities hold
\begin{equation}
F\Big(\frac{a+b}{2}\Big)\leq \frac{ \Gamma(\alpha+1)}{2(\ln (b/a))^\alpha}\left[ H^\alpha_{a+}F(b)+H^\alpha_{b-}F(a)\right] \leq \frac{F(a)+F(b)}{2}
\end{equation}
with $\alpha > 0$ and $\rho >0$.
\end{corollary}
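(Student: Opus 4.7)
The plan is to deduce this corollary directly from Theorem~\ref{HH_thm1} by taking the limit $\rho \to 0^+$; no fresh computation is required beyond routine convergence facts. I would first dispose of the two auxiliary claims about $F$. Convexity of $F(x) = f(x) + f(a+b-x)$ follows because $f(a+b-\cdot)$ is the composition of the convex $f$ with the affine involution $x \mapsto a+b-x$, which preserves convexity, so $F$ is a sum of two convex functions. Integrability of $F$ on $[a,b]$ follows because the same involution is a measure-preserving bijection of $[a,b]$, so $f \in L[a,b]$ forces $f(a+b-\cdot) \in L[a,b]$.

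Next, since $F$ is convex and integrable, Theorem~\ref{HH_thm1} applies and yields, for every $\rho > 0$,
\begin{equation*}
F\!\left(\frac{a+b}{2}\right) \leq \frac{\rho^\alpha \Gamma(\alpha+1)}{2(b^\rho-a^\rho)^\alpha}\!\left[{}^\rho I^\alpha_{a+}F(b) + {}^\rho I^\alpha_{b-}F(a)\right] \leq \frac{F(a)+F(b)}{2}.
\end{equation*}
The left- and right-hand sides are independent of $\rho$, so the corollary will follow once I show that the middle expression converges to the stated Hadamard expression as $\rho \to 0^+$.

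The convergence of the fractional integrals is the content of the limit theorem stated in the preliminaries: $\lim_{\rho \to 0^+}{}^\rho I^\alpha_{a+}F(b) = H^\alpha_{a+}F(b)$ and similarly for the right-sided operator. For the scalar prefactor I would use the Taylor expansion $b^\rho - a^\rho = e^{\rho \ln b} - e^{\rho \ln a} = \rho \ln(b/a) + O(\rho^2)$, which gives
\begin{equation*}
\lim_{\rho \to 0^+} \frac{\rho^\alpha}{(b^\rho - a^\rho)^\alpha} = \frac{1}{\bigl(\ln(b/a)\bigr)^\alpha}.
\end{equation*}
Multiplying these limits together yields the Hadamard prefactor $\Gamma(\alpha+1)/\bigl(2(\ln(b/a))^\alpha\bigr)$, and passing to the limit in the Katugampola inequality (limits respect weak inequalities between finite real numbers) produces the claimed chain.

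The only subtlety — hardly an obstacle — is making sure that the limit can indeed be taken inside the inequality. Since both the scalar factor and the two integral terms have genuine finite limits, the middle expression converges to a single real number sandwiched between the two fixed bounds, so the $\rho$-uniform double inequality collapses to the desired Hadamard version.
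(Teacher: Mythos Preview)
Your proposal is correct and follows essentially the same approach as the paper, which simply states that the corollary is obtained by letting $\rho \to 0^+$ in inequality~\eqref{HH_eqn1} of Theorem~\ref{HH_thm1}. You supply more detail than the paper does (the Taylor expansion of the prefactor and the explicit appeal to the limit theorem for the Katugampola integrals), but the underlying argument is identical.
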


In order to prove Theorem \ref{thm3.7}, we need the following lemma.
\begin{lemma} 
\label{lemma1}
Let $f: [a, b] \to \R$ be a differentiable mapping on $(a, b)$ with $a<b$. If $f' \in L[a, b]$, then $F$ is also differentiable and $F' \in L[a, b]$, and the following equality holds:
\begin{eqnarray}
\label{eqn6}
\frac{F(a)+F(b)}{2} - \frac{\rho^\alpha \Gamma(\alpha+1)}{2(b^\rho-a^\rho)^\alpha}\left[ \,^\rho I^\alpha_{a+}F(b)+ \,^\rho I^\alpha_{b-}F(a)\right]  = \frac{b-a}{2(b^\rho-a^\rho)^\alpha} \int_0^1 \K(t) F'((1-t)a + bt) dt 
\end{eqnarray}
with $\alpha > 0$ and $\rho >0$, where $\K(t) = [((1-t)a + bt)^\rho - a^\rho]^\alpha - [b^\rho-((1-t)a + bt)^\rho]^\alpha$.
\end{lemma}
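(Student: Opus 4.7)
The plan is to evaluate the right-hand side of the claimed equality and show it equals the left-hand side via integration by parts, following the style of Lemma~2 in \cite{sari1}. First I would perform the linear change of variables $u=(1-t)a+tb$, $du=(b-a)\,dt$, which turns the integral over $[0,1]$ into an integral over $[a,b]$, absorbing the factor $(b-a)$ in the prefactor. The bracketed kernel $\mathcal{K}(t)$ then splits the right-hand side into the difference of
\[
I_{1}=\int_{a}^{b}\bigl[u^{\rho}-a^{\rho}\bigr]^{\alpha}F'(u)\,du,\qquad
I_{2}=\int_{a}^{b}\bigl[b^{\rho}-u^{\rho}\bigr]^{\alpha}F'(u)\,du,
\]
both of which are amenable to integration by parts.

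Next I would integrate $I_{1}$ by parts with $U=[u^{\rho}-a^{\rho}]^{\alpha}$, $dV=F'(u)\,du$, using $dU=\alpha\rho\,u^{\rho-1}[u^{\rho}-a^{\rho}]^{\alpha-1}\,du$. The boundary contribution at $u=a$ vanishes (the bracket is zero) while at $u=b$ it produces $(b^{\rho}-a^{\rho})^{\alpha}F(b)$, and the remaining integral has exactly the kernel $u^{\rho-1}(u^{\rho}-a^{\rho})^{\alpha-1}$ that appears in ${}^{\rho}I^{\alpha}_{b-}F(a)$. Collecting constants so as to match $\frac{\rho^{1-\alpha}}{\Gamma(\alpha)}$ gives
\[
I_{1}=(b^{\rho}-a^{\rho})^{\alpha}F(b)-\rho^{\alpha}\Gamma(\alpha+1)\,{}^{\rho}I^{\alpha}_{b-}F(a).
\]
Symmetrically, integration by parts in $I_{2}$ (where now $dU=-\alpha\rho\,u^{\rho-1}[b^{\rho}-u^{\rho}]^{\alpha-1}\,du$ and the boundary term survives at $u=a$) yields
\[
I_{2}=-(b^{\rho}-a^{\rho})^{\alpha}F(a)+\rho^{\alpha}\Gamma(\alpha+1)\,{}^{\rho}I^{\alpha}_{a+}F(b).
\]
Finally I would form $\tfrac{1}{2(b^{\rho}-a^{\rho})^{\alpha}}(I_{1}-I_{2})$ and observe that the boundary terms combine into $\tfrac{F(a)+F(b)}{2}$ while the fractional-integral terms combine into the bracketed expression on the left, completing the identity.

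The main obstacle is purely bookkeeping: one must be careful that integrating by parts against the weight $[u^{\rho}-a^{\rho}]^{\alpha}$ produces the \emph{right-sided} Katugampola integral ${}^{\rho}I^{\alpha}_{b-}F(a)$ (and vice versa for $[b^{\rho}-u^{\rho}]^{\alpha}$), and that the factors $\rho^{1-\alpha}$ and $\Gamma(\alpha)$ from the definition of ${}^{\rho}I^{\alpha}$ reassemble into $\rho^{\alpha}\Gamma(\alpha+1)$ after multiplication by $\alpha\rho$. Once the cross-matching and the constants are handled correctly, the minus sign in $\mathcal{K}(t)$ is precisely what makes the two boundary contributions add rather than cancel, and the claimed identity falls out.
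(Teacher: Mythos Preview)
Your proposal is correct and follows essentially the same route as the paper: both perform the substitution $u=(1-t)a+tb$, split the kernel $\mathcal{K}$ into its two summands, and integrate each piece by parts so that the weight $(u^{\rho}-a^{\rho})^{\alpha}$ produces ${}^{\rho}I^{\alpha}_{b-}F(a)$ and the weight $(b^{\rho}-u^{\rho})^{\alpha}$ produces ${}^{\rho}I^{\alpha}_{a+}F(b)$, after which the boundary terms assemble into $\tfrac{F(a)+F(b)}{2}$. The only cosmetic difference is that the paper keeps the integrals in the $t$-variable until the integration by parts (writing $I=I_{1}+I_{2}$ with the minus sign absorbed into $I_{2}$), whereas you change variables first and then compute $I_{1}-I_{2}$; the bookkeeping of constants you flag is handled identically.
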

\begin{proof}
Note that 
\begin{eqnarray*}
I &=& \int_0^1 \K(t) F'((1-t)a + bt) dt \\
&=& \int_0^1  [((1-t)a + bt)^\rho - a^\rho]^\alpha F'((1-t)a + bt) dt  - \int_0^1  [b^\rho-((1-t)a + bt)^\rho]^\alpha F'((1-t)a + bt) dt \\
&=& I_1+I_2. 
\end{eqnarray*}
Integrating by parts, we get 
\begin{eqnarray}
\label{eqn7}
I_1 &=& \int_0^1  [((1-t)a + bt)^\rho - a^\rho]^\alpha F'((1-t)a + bt) dt = \frac{1}{b-a} \int_a^b [u^\rho - a^\rho]^\alpha d F(u) \\
&=&\left[ \frac{(u^\rho - a^\rho)^\alpha F(u)}{b-a} \right]_a^b - \frac{\alpha \rho}{b-a} \int_a^b \frac{u^{\rho-1}}{(u^\rho - a^\rho)^{1-\alpha}}F(u) du \nonumber\\
&=& \frac{(b^\rho - a^\rho)^\alpha }{b-a}F(b) - \frac{\Gamma(\alpha+1) \rho^\alpha}{b-a} \,^\rho I_{b-}^\alpha F(a). \nonumber 
\end{eqnarray}
Similarly, 
\begin{eqnarray}
\label{eqn8}
I_2 &=& -\int_0^1  [b^\rho-((1-t)a + bt)^\rho ]^\alpha F'((1-t)a + bt) dt \\
&=& \frac{(b^\rho - a^\rho)^\alpha }{b-a}F(a) - \frac{\Gamma(\alpha+1) \rho^\alpha}{b-a} \,^\rho I_{a+}^\alpha F(b). \nonumber 
\end{eqnarray}
By adding \eqref{eqn7} and \eqref{eqn8}, we get 
\begin{eqnarray*}
I = \frac{(b^\rho - a^\rho)^\alpha }{b-a}[F(a)+F(b)] - \frac{\Gamma(\alpha+1) \rho^\alpha}{b-a} [\,^\rho I_{b-}^\alpha F(a) +\,^\rho I_{a+}^\alpha F(b) ]. 
\end{eqnarray*}
Then, multiplying both sides by $\frac{b-a}{2(b^\rho-a^\rho)^\alpha}$ we obtain equality \eqref{eqn6}.
\end{proof}


We are now ready to prove the following Hermite--Hadamard type inequality.
\begin{theorem} 
\label{thm3.7}
Let $f: [a, b] \to \R$ be a differentiable mapping on $(a, b)$ with $a<b$ and $f'\in L[a, b]$. Then $F$ is also differentiable and $F' \in L[a, b]$. If $|f'|$ is convex on $[a, b]$, then the following inequality holds:
\begin{eqnarray}
\label{eqn9}
\left| \frac{F(a)+F(b)}{2} - \frac{\rho^\alpha \Gamma(\alpha+1)}{2(b^\rho-a^\rho)^\alpha}\left[ \,^\rho I^\alpha_{a+}F(b)+ \,^\rho I^\alpha_{b-}F(a)\right] \right| \leq  \frac{b-a}{2(b^\rho-a^\rho)^\alpha} \int_0^1 |\K(t)|dt \, ( |f'(a)| + |f'(b)|) 
\end{eqnarray}
with $\alpha > 0$ and $\rho >0$, where $\K(t) = [((1-t)a + bt)^\rho - a^\rho]^\alpha - [b^\rho-((1-t)a + bt)^\rho]^\alpha$.
\end{theorem}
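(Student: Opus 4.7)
The plan is to use Lemma \ref{lemma1} as the starting point, which already gives the exact identity
\[
\frac{F(a)+F(b)}{2} - \frac{\rho^\alpha \Gamma(\alpha+1)}{2(b^\rho-a^\rho)^\alpha}\left[ \,^\rho I^\alpha_{a+}F(b)+ \,^\rho I^\alpha_{b-}F(a)\right] = \frac{b-a}{2(b^\rho-a^\rho)^\alpha}\int_0^1 \K(t)\, F'\bigl((1-t)a+bt\bigr)\,dt.
\]
Taking absolute values on both sides and using the triangle inequality for integrals, it suffices to produce a uniform bound on $|F'((1-t)a+bt)|$ of the form $|f'(a)|+|f'(b)|$. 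Pulling that bound out of the integral immediately yields the right-hand side of \eqref{eqn9}, since the remaining factor $\int_0^1 |\K(t)|\,dt$ is exactly what appears in the statement.

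The key step is to estimate $|F'((1-t)a+bt)|$. Since $F(x)=f(x)+f(a+b-x)$, differentiation gives $F'(x)=f'(x)-f'(a+b-x)$. Setting $x=(1-t)a+tb$, so that $a+b-x=ta+(1-t)b$, I would apply the triangle inequality to obtain
\[
|F'((1-t)a+tb)| \leq |f'((1-t)a+tb)| + |f'(ta+(1-t)b)|.
\]
Next I invoke the convexity of $|f'|$ on $[a,b]$ to bound each summand:
\[
|f'((1-t)a+tb)| \leq (1-t)|f'(a)|+t|f'(b)|, \qquad |f'(ta+(1-t)b)| \leq t|f'(a)|+(1-t)|f'(b)|.
\]
Adding these two estimates, the coefficients collapse to give the clean bound $|F'((1-t)a+tb)| \leq |f'(a)|+|f'(b)|$, independent of $t$.

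Inserting this estimate into the integral identity from Lemma \ref{lemma1} and pulling the constant $|f'(a)|+|f'(b)|$ out of the integral delivers
\[
\left|\frac{F(a)+F(b)}{2} - \frac{\rho^\alpha \Gamma(\alpha+1)}{2(b^\rho-a^\rho)^\alpha}\left[ \,^\rho I^\alpha_{a+}F(b)+ \,^\rho I^\alpha_{b-}F(a)\right]\right| \leq \frac{b-a}{2(b^\rho-a^\rho)^\alpha}\left(\int_0^1 |\K(t)|\,dt\right)(|f'(a)|+|f'(b)|),
\]
which is precisely \eqref{eqn9}. I do not anticipate any genuine obstacle here; the main point to verify carefully is that differentiating the reflected term $f(a+b-x)$ produces the sign needed so that the two convexity estimates combine symmetrically, and that $F'$ is integrable on $[a,b]$ whenever $f'$ is (which follows directly from the definition of $F$ and the change of variables $x \mapsto a+b-x$).
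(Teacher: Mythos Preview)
Your proposal is correct and follows essentially the same approach as the paper: invoke Lemma~\ref{lemma1}, bound $|F'((1-t)a+tb)|$ by $|f'(a)|+|f'(b)|$ via the triangle inequality and convexity of $|f'|$, and pull this constant out of the integral. The paper's proof is identical in structure and detail.
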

\begin{proof}
Notice that $F'(x) = f'(x)-f'(a+b-x)$. By the convexity of $|f'|$, we have 
\begin{eqnarray}
\label{eqn10}
|F'((1-t)a + bt)| &=& |f'((1-t)a + bt) - f'(ta +(1-t)b)| \nonumber\\
&\leq& (1-t)|f'(a)| + t|f'(b)| + t|f'(a)| + (1-t)|f'(b)|  \\
&=& |f'(a)| + |f'(b)|.\nonumber
\end{eqnarray}
By inequalities \eqref{eqn6} and \eqref{eqn10}, we get 
\begin{eqnarray*}
& &\left| \frac{F(a)+F(b)}{2} - \frac{\rho^\alpha \Gamma(\alpha+1)}{2(b^\rho-a^\rho)^\alpha}\left[ \,^\rho I^\alpha_{a+}F(b)+ \,^\rho I^\alpha_{b-}F(a)\right] \right| \\
&\leq& \frac{b-a}{2(b^\rho-a^\rho)^\alpha} \int_0^1 |\K(t)| |F'((1-t)a + bt)| dt\\
&\leq& \frac{b-a}{2(b^\rho-a^\rho)^\alpha} \int_0^1 |\K(t)|dt \, ( |f'(a)| + |f'(b)|).
\end{eqnarray*}
\end{proof}

\begin{remark}
In Theorem \ref{thm3.7}, by letting $\rho \to 1$, inequality \eqref{eqn6} becomes inequality \eqref{Sari_HH2} of  Theorem \ref{sarikaya_thm2}. As 
\begin{eqnarray*}
 \lim_{\rho \to 1}\int_0^1 |\K(t)|dt &=& (b-a)^\alpha \int_0^1 \left| t^\alpha -(1-t)^\alpha \right| dt\\
&=&(b-a)^\alpha \left[ \int_0^{\frac{1}{2}} ((1-t)^\alpha - t^\alpha )dt + \int_{\frac{1}{2}}^{1} (t^\alpha -(1-t)^\alpha)dt\right]\\
&=& \frac{2 (b-a)^\alpha}{\alpha+1}\left( 1-\frac{1}{2^\alpha}\right).
\end{eqnarray*}
\end{remark}

On the other hand, by letting $\rho \to 0^+$ in equality \eqref{eqn6}, we get the following result for Hadamard fractional integrals.
\begin{corollary}
Let $f: [a, b] \to \R$ be a differentiable mapping on $(a, b)$ with $a<b$ and $f'\in L[a, b]$. Then $F$ is also differentiable and $F' \in L[a, b]$. If $|f'|$ is convex on $[a, b]$, then the following inequality holds:
\begin{eqnarray}
\left| \frac{F(a)+F(b)}{2} - \frac{\rho^\alpha \Gamma(\alpha+1)}{2(\ln(\frac{b}{a}))^\alpha}\left[  H^\alpha_{a+}F(b)+ H^\alpha_{b-}F(a)\right] \right| \leq  \frac{b-a}{2(\ln(\frac{b}{a}))^\alpha} \int_0^1 |\K(t)|dt \, ( |f'(a)| + |f'(b)|)
\end{eqnarray}
with $\alpha>0$, where $\K(t) = [((1-t)a + bt)^\rho - a^\rho]^\alpha - [b^\rho-((1-t)a + bt)^\rho]^\alpha$.
\end{corollary}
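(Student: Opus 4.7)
The plan is to derive this corollary as the limiting case $\rho\to 0^+$ of Theorem~\ref{thm3.7}. Specifically, I would begin with inequality~\eqref{eqn9} for an arbitrary $\rho>0$, which is already established, and pass to the limit $\rho\to 0^+$ on both sides. On the left-hand side, the limit theorem stated immediately after the definition of the Katugampola integral gives
\[
\lim_{\rho\to 0^+}{}^\rho I^\alpha_{a+}F(b)=H^\alpha_{a+}F(b),\qquad \lim_{\rho\to 0^+}{}^\rho I^\alpha_{b-}F(a)=H^\alpha_{b-}F(a).
\]
Combined with the elementary expansion $b^\rho-a^\rho=\rho\ln(b/a)+O(\rho^2)$ as $\rho\to 0^+$, which yields $(b^\rho-a^\rho)^\alpha\sim\rho^\alpha(\ln(b/a))^\alpha$, the prefactor $\rho^\alpha\Gamma(\alpha+1)/(2(b^\rho-a^\rho)^\alpha)$ tends to $\Gamma(\alpha+1)/(2(\ln(b/a))^\alpha)$, producing the Hadamard-type expression inside the absolute value.

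Next, on the right-hand side of~\eqref{eqn9}, the same expansion applied to each of the two bracketed quantities in
\[
\K(t)=[((1-t)a+bt)^\rho-a^\rho]^\alpha-[b^\rho-((1-t)a+bt)^\rho]^\alpha
\]
shows that $\K(t)=O(\rho^\alpha)$ pointwise in $t\in[0,1]$, with an explicit logarithmic limit. Hence the quotient $\int_0^1|\K(t)|\,dt/(b^\rho-a^\rho)^\alpha$ remains bounded as $\rho\to 0^+$ and converges to a finite value, so the whole right-hand side of~\eqref{eqn9} has a finite limit matching the right-hand side of the corollary. Multiplying through by $(b-a)/2$ and the convex combination $|f'(a)|+|f'(b)|$ then delivers the claimed bound.

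The main technical obstacle is justifying the interchange of limit and integration, both inside the Katugampola fractional integrals and inside $\int_0^1|\K(t)|\,dt$. Since $0<a<b<\infty$ is bounded away from the origin, the kernel factors $t^{\rho-1}$, $(x^\rho-t^\rho)^{\alpha-1}$ and the normalization $\rho^{1-\alpha}$ are uniformly controlled in $\rho$ on compact subsets avoiding the endpoints; the hypothesis $F,F'\in L[a,b]$ together with the convexity bound $|F'(x)|\le|f'(a)|+|f'(b)|$ used in the proof of Theorem~\ref{thm3.7} supplies integrable majorants, so dominated convergence applies. A standard endpoint analysis of the Riemann--Liouville-type singularity at $t=x$ completes the justification and yields the stated inequality.
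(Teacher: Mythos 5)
Your proposal is correct and takes essentially the same route as the paper, which obtains this corollary simply by ``letting $\rho \to 0^+$'' in the preceding result (Lemma~\ref{lemma1} / Theorem~\ref{thm3.7}); you carry out exactly that limit passage, and in addition supply the expansion $b^\rho-a^\rho=\rho\ln(b/a)+O(\rho^2)$ and the dominated-convergence justification that the paper leaves implicit. Note only that the limit, taken literally, replaces $\K(t)$ by its logarithmic limit and removes the stray $\rho^\alpha$ factor; the $\rho$'s remaining in the corollary as printed are an artifact of the paper's statement, not a defect of your argument.
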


\subsection{Hermite-Hadamard-Fej{\'e}r type inequalities}
The Hermite--Hadamard--Fej{\'e}r inequalities can also be generalized via Katugampola fractional integrals as follows.
\begin{theorem} 
\label{thm3.10}
Let $f:[a, b]\to \R$ be convex function with $a<b$ and $f\in L[a, b]$. Then $F(x)$ is also convex and  $F\in L[a, b].$ If $g:[a, b]\to \R$ is nonnegative and integrable, then the following inequalities hold:
\begin{eqnarray}
\label{eqn11}
F\Big(\frac{a+b}{2}\Big)\left[ \,^\rho I^\alpha_{a+}g(b)+\,^\rho I^\alpha_{b-}g(a)\right] \leq \left[ \,^\rho I^\alpha_{a+}(gF)(b)+\,^\rho I^\alpha_{b-}(gF)(a)\right]  \leq \frac{F(a)+F(b)}{2}\left[ \,^\rho I^\alpha_{a+}g(b)+\,^\rho I^\alpha_{b-}g(a)\right]
\end{eqnarray}
with $\alpha > 0$ and $\rho>0$.
\end{theorem}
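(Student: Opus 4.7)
The proof will closely parallel that of Theorem~\ref{HH_thm1}, carrying the non-negative weight $g$ through every integration. The guiding observation is that $F$ already carries its own symmetry about $(a+b)/2$, so (in contrast with the classical Fej\'er-type result of Theorem~\ref{Iscan_thm1}) no symmetry hypothesis on $g$ is required here.

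For the lower bound, I would start from the convexity inequality
\[
F\Big(\frac{a+b}{2}\Big)\le F((1-t)a+tb),\qquad t\in[0,1],
\]
exactly as in \eqref{eqnconvex}. Multiplying both sides by the non-negative factor
\[
g((1-t)a+tb)\cdot\frac{((1-t)a+tb)^{\rho-1}}{[b^\rho-((1-t)a+tb)^\rho]^{1-\alpha}},
\]
integrating over $t\in[0,1]$, and applying the linear substitution $u=(1-t)a+tb$, $du=(b-a)\,dt$ yields
\[
F\Big(\frac{a+b}{2}\Big)\,^\rho I^\alpha_{a+}g(b)\le \,^\rho I^\alpha_{a+}(gF)(b),
\]
after the factors of $\Gamma(\alpha)\rho^{\alpha-1}/(b-a)$ cancel on both sides. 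Repeating the argument with the kernel \eqref{factor2} in place of \eqref{factor1} gives the companion estimate $F(\frac{a+b}{2})\,^\rho I^\alpha_{b-}g(a)\le \,^\rho I^\alpha_{b-}(gF)(a)$; adding the two produces the left inequality of \eqref{eqn11}.

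For the upper bound, the essential step is to promote the bare convexity estimate $F((1-t)a+tb)\le(1-t)F(a)+tF(b)$ to the stronger symmetric bound $F((1-t)a+tb)\le\frac{F(a)+F(b)}{2}$. This follows at once from $a+b-[(1-t)a+tb]=ta+(1-t)b$, which together with the symmetry of $F$ about $(a+b)/2$ forces $F((1-t)a+tb)=F(ta+(1-t)b)$; averaging the two convexity bounds $F((1-t)a+tb)\le(1-t)F(a)+tF(b)$ and $F(ta+(1-t)b)\le tF(a)+(1-t)F(b)$ yields the claim. Multiplying by the same two non-negative kernels (now with the $F$ value being bounded above), integrating in $t$, and adding the resulting inequalities produces
\[
\,^\rho I^\alpha_{a+}(gF)(b)+\,^\rho I^\alpha_{b-}(gF)(a)\le \frac{F(a)+F(b)}{2}\bigl[\,^\rho I^\alpha_{a+}g(b)+\,^\rho I^\alpha_{b-}g(a)\bigr].
\]

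The manipulations are essentially routine once the correct kernels are identified; the main point that requires care is verifying that the symmetry-averaging trick above genuinely removes the need for $g$ to be symmetric, which it does because $F$ supplies the symmetry by construction. Since the change of variable $u=(1-t)a+tb$ transports each pointwise inequality in $t$ directly onto the defining integral of $^\rho I^\alpha_{a\pm}$, no further analytic subtlety arises, and the limits $\rho\to 1$ and $\rho\to 0^+$ will immediately recover, respectively, the Riemann--Liouville Fej\'er-type inequality of Theorem~\ref{Iscan_thm1} and its Hadamard analogue.
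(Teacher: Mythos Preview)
Your proposal is correct and follows essentially the same approach as the paper: both proofs multiply the pointwise bounds $F(\frac{a+b}{2})\le F((1-t)a+tb)\le\frac{F(a)+F(b)}{2}$ by the two $g$-weighted Katugampola kernels, integrate, and add. The only cosmetic difference is that the paper derives the upper pointwise bound by summing the two convexity inequalities for $f$ directly, whereas you phrase it as averaging two convexity bounds for $F$ via its symmetry; these are the same computation.
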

\begin{proof}
Since $f$ is convex on $[a, b]$, for all $t \in [0, 1]$, we have 
\begin{equation*}
2f\Big(\frac{a+b}{2}\Big) \leq f(ta+(1-t)b) + f((1-t)a+tb),
\end{equation*}
That is
\begin{equation}
\label{eqnconvex1}
F\Big(\frac{a+b}{2}\Big)\leq F((1-t)a+tb).
\end{equation}
Multiplying both sides of \eqref{eqnconvex1} by 
\begin{equation}
\label{factor11}
\frac{((1-t)a+tb)^{\rho-1}}{[b^\rho - ((1-t)a+tb)^\rho]^{1-\alpha}}g((1-t)a+tb)
\end{equation} 
and integrating the resulting inequality with respect to $t$ over $[0, 1]$, we get 
\begin{eqnarray*}
\frac{\rho^{\alpha-1}\Gamma(\alpha)}{b-a}  \,^\rho I^\alpha_{a+}g(b) F(\frac{a+b}{2}) &\leq& \int_0^1 \frac{((1-t)a+tb)^{\rho-1}}{[b^\rho - ((1-t)a+tb)^\rho]^{1-\alpha}} g((1-t)a+tb) F((1-t)a+tb) dt\\
&=& \int_a^b \frac{u^{\rho-1}}{(b^\rho - u^\rho)^{1-\alpha}}(gF)(u)\frac{du}{b-a}\\
&=& \frac{\Gamma (\alpha)\rho^{\alpha-1}}{b-a} \,^\rho I^\alpha_{a+}(gF)(b),
\end{eqnarray*}
$i.e.$ 
\begin{equation} 
\label{eqn12}
F(\frac{a+b}{2}) \,^\rho I^\alpha_{a+}g(b) \leq  \,^\rho I^\alpha_{a+}(gF)(b).
\end{equation}
Similarly, we have 
\begin{equation} 
\label{eqn13}
F(\frac{a+b}{2})\,^\rho I^\alpha_{b-}g(a) \leq  \,^\rho I^\alpha_{b-}(gF)(a).
\end{equation}
By adding inequalities \eqref{eqn12} and \eqref{eqn13}, we obtain 
\[F(\frac{a+b}{2})\left[ \,^\rho I^\alpha_{a+}g(b)+\,^\rho I^\alpha_{b-}g(a)\right] \leq \left[ \,^\rho I^\alpha_{a+}(gF)(b)+\,^\rho I^\alpha_{b-}(gF)(a)\right]\]
The first inequality of \eqref{eqn11} is proved.

For the second inequality, since $f$ is a convex function, then for all $t \in [0, 1]$, we have 
\begin{equation*}
f(ta+(1-t)b) + f((1-t)a+tb)\leq f(a) + f(b),
\end{equation*}
which can be rewritten as
\begin{equation}
\label{eqn14}
F((1-t)a+tb) \leq \frac{F(a)+F(b)}{2}.
\end{equation}
Multiplying both sides of \eqref{eqn14} by factor \eqref{factor11} and integrating over $[0, 1]$ with respect to $t$, we get 
\begin{equation*}
\frac{\Gamma{(\alpha)} \rho^{\alpha-1}}{b-a}\,^\rho I^\alpha_{a+}(gF)(b) \leq \frac{\Gamma{(\alpha)} \rho^{\alpha-1}}{b-a} \,^\rho I^\alpha_{a+}g(b) \frac{F(a)+F(b)}{2} 
\end{equation*}
$i.e.$
\begin{equation}
\label{eqn15}
 \,^\rho I^\alpha_{a+}(gF)(b) \leq \frac{F(a)+F(b)}{2} \,^\rho I^\alpha_{a+}g(b)
\end{equation}
Similarly, we have 
\begin{equation}
\label{eqn16}
 \,^\rho I^\alpha_{b-}(gF)(a) \leq \frac{F(a)+F(b)}{2} \,^\rho I^\alpha_{b-}g(a)
\end{equation}
Adding inequality \eqref{eqn15} and \eqref{eqn16}, we obtain 
\[ \left[ \,^\rho I^\alpha_{a+}(gF)(b)+\,^\rho I^\alpha_{b-}(gF)(a)\right] \leq \frac{F(a)+F(b)}{2}\left[ \,^\rho I^\alpha_{a+}g(b)+\,^\rho I^\alpha_{b-}g(a)\right].\]
The proof is completed.
\end{proof}

\begin{remark}
Theorem \ref{thm3.10} is a generalization of Hermite--Hadamard--Fej{\'e}r inequalities \cite{Iscan1}. 
\begin{enumerate}
\item If $f$ is symmetric to $\frac{a+b}{2}$, then $F(x) = f(x)+ f(a+b-x) = 2f(x)$, inequality \eqref{eqn11} becomes
\begin{eqnarray}
f(\frac{a+b}{2})\left[ \,^\rho I^\alpha_{a+}g(b)+\,^\rho I^\alpha_{b-}g(a)\right] \leq \left[ \,^\rho I^\alpha_{a+}(gf)(b)+\,^\rho I^\alpha_{b-}(gf)(a)\right]  \leq \frac{f(a)+f(b)}{2}\left[ \,^\rho I^\alpha_{a+}g(b)+\,^\rho I^\alpha_{b-}g(a)\right]
\end{eqnarray}
with $\alpha > 0$ and $\rho>0$.
\item If we take $g(x) =1$ in inequality \eqref{eqn11}, then it becomes inequality \eqref{HH_eqn1} of Theorem \ref{HH_thm1}.
\item If $g(x)$ is symmetric to $(a+b)/2$, then letting $\rho \to 1$, inequality \eqref{eqn11} becomes inequality \eqref{Iscan_HH1} of Theorem \ref{Iscan_thm1}. Since 
\begin{eqnarray*}
\lim_{\rho\to1} \,^\rho I^\alpha_{a+}(gF)(b) &=& \lim_{\rho\to1} \frac{\rho^{1-\alpha}}{\Gamma(\alpha)} \int_a^b \frac{t^{\rho-1}}{(b^\rho-t^\rho)^{1-\alpha}}(gF)(t)dt\\
&=& \frac{1}{\Gamma(\alpha)} \int_a^b (b-t)^{\alpha-1}g(t)f(t)dt+\frac{1}{\Gamma(\alpha)} \int_a^b (b-t)^{\alpha-1}
g(t)f(a+b-t)dt\\
&=&J^\alpha_{a+}(gf)(b) + J^\alpha_{b-}(gf)(a)
\end{eqnarray*}
and similarly$$\lim_{\rho\to1} \,^\rho I^\alpha_{b-}(gF)(a)=J^\alpha_{b-}(gf)(a) + J^\alpha_{a+}(gf)(b).$$
\end{enumerate}
\end{remark}

To prove the inequality in Theorem \ref{thm3.14}, we need the following lemma.
\begin{lemma} 
\label{lemma2}
Let $f: [a, b]\to \R$ be a differentiable mapping on $(a, b)$ with $0\leq a<b$ and $f'\in L[a, b]$. Then $F(x)$ is also differentiable and  $F' \in L[a, b].$ If $g:[a, b]\to \R$ is integrable, then the following equality holds:
\begin{eqnarray}
\label{eqn17}
 & & \frac{F(a)+F(b)}{2}\left[ \,^\rho I^\alpha_{a+}g(b)+\,^\rho I^\alpha_{b-}g(a)\right] -\left[ \,^\rho I^\alpha_{a+}(gF)(b)+\,^\rho I^\alpha_{b-}(gF)(a)\right] \\
 & &\hspace{4cm}=  \frac{\rho^{1-\alpha}}{2\Gamma(\alpha)} \int_a^b \left[ \int_a^t \G(s) g(s)ds - \int_t^b \G(s) g(s)ds \right]F'(t) dt \nonumber 
\end{eqnarray}
with $\alpha>0$ and $\rho>0$, where \[\G(s) = \frac{s^{\rho-1}}{(b^\rho-s^\rho)^{1-\alpha}} + \frac{s^{\rho-1}}{(s^\rho-a^\rho)^{1-\alpha}}. \]
\end{lemma}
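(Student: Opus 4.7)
The plan is to work from the right-hand side of the identity and reduce it to the left-hand side by a single application of integration by parts, using the fact that the inner bracketed expression is an antiderivative (up to a constant) of $2\mathcal{G}(t) g(t)$.

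First, I would introduce the auxiliary function $P(t) := \int_a^t \mathcal{G}(s) g(s)\,ds - \int_t^b \mathcal{G}(s) g(s)\,ds$, so that the right-hand side of \eqref{eqn17} is $\frac{\rho^{1-\alpha}}{2\Gamma(\alpha)} \int_a^b P(t) F'(t)\,dt$. The two key observations are (i) $P(a) = -\int_a^b \mathcal{G}(s) g(s)\,ds$ and $P(b) = +\int_a^b \mathcal{G}(s) g(s)\,ds$, and (ii) by the Lebesgue differentiation theorem for the integrable kernel $\mathcal{G}\cdot g$, we have $P'(t) = 2\mathcal{G}(t) g(t)$ a.e.\ on $(a,b)$. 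Since $F' \in L[a,b]$ (inherited from $f'$) and $P$ is absolutely continuous, integration by parts is legitimate.

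Next, I would carry out the integration by parts
\begin{equation*}
\int_a^b P(t) F'(t)\,dt = \bigl[P(t) F(t)\bigr]_a^b - \int_a^b P'(t) F(t)\,dt,
\end{equation*}
and substitute the formulas for $P(a)$, $P(b)$, and $P'(t)$ to obtain
\begin{equation*}
\bigl(F(a) + F(b)\bigr) \int_a^b \mathcal{G}(s) g(s)\,ds - 2\int_a^b \mathcal{G}(t) g(t) F(t)\,dt.
\end{equation*}
The final step is to recognize, by splitting $\mathcal{G}(s) = \frac{s^{\rho-1}}{(b^\rho-s^\rho)^{1-\alpha}} + \frac{s^{\rho-1}}{(s^\rho-a^\rho)^{1-\alpha}}$ and invoking the very definition of the Katugampola fractional integrals, that
\begin{equation*}
\frac{\rho^{1-\alpha}}{\Gamma(\alpha)}\int_a^b \mathcal{G}(s) g(s)\,ds = {}^\rho I^\alpha_{a+} g(b) + {}^\rho I^\alpha_{b-} g(a),
\end{equation*}
and analogously with $g$ replaced by $gF$. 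Multiplying by $\frac{\rho^{1-\alpha}}{2\Gamma(\alpha)}$ then matches the left-hand side of \eqref{eqn17} term-for-term.

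There is no serious obstacle here: the argument is purely a bookkeeping identity, and the only place to be careful is the sign pattern produced by the two pieces of $P(t)$ at the endpoints (the signs conspire to give $F(a)+F(b)$ rather than $F(b)-F(a)$) and the factor of $2$ that exactly cancels the $\frac{1}{2}$ in front. The regularity hypotheses ($f$ differentiable with $f' \in L[a,b]$, $g$ integrable) are exactly what is needed to justify the integration by parts and the Fubini-type rearrangement implicit in identifying the fractional integrals.
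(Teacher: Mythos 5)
Your proposal is correct and follows essentially the same route as the paper: the paper also starts from the right-hand side, integrates by parts (splitting the bracket into two pieces $I_1$ and $I_2$ rather than using a single primitive $P$ with $P'=2\mathcal{G}g$, which is only a cosmetic difference), and then identifies $\frac{\rho^{1-\alpha}}{\Gamma(\alpha)}\int_a^b \mathcal{G}(s)g(s)\,ds$ and its $gF$-analogue with the sums of left- and right-sided Katugampola integrals. The sign bookkeeping and the cancellation of the factor $2$ against the $\tfrac12$ work out exactly as you describe.
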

\begin{proof}
Note that
\begin{eqnarray*}
I &=& \int_a^b \left[ \int_a^t \G(s) g(s)ds - \int_t^b \G(s) g(s)ds \right]F'(t) dt\\
&=& \int_a^b \int_a^t \G(s) g(s)dsF'(t) dt - \int_a^b\int_t^b \G(s) g(s)ds F'(t) dt\\
&=& I_1+I_2.
\end{eqnarray*}
Integrating by parts, we get 
\begin{eqnarray}
\label{eqn18}
I_1 &=& \int_a^b \int_a^t \G(s) g(s)dsF'(t) dt \\
&=& \left[ \int_a^t \G(s)g(s)ds F(t)\right]_a^b -\int_a^b\G(t)g(t)F(t)dt \nonumber \\
&=& \int_a^b \G(s)g(s)ds F(b)-\int_a^b\G(t)g(t)F(t)dt \nonumber\\
&=& \Gamma(\alpha)\rho^{\alpha-1}  [\,^\rho I_{a+}^\alpha g(b) + \,^\rho I_{b-}^\alpha g(a)]F(b)  -\Gamma(\alpha)\rho^{\alpha-1} [\,^\rho I_{a+}^\alpha (gF)(b) + \,^\rho I_{b-}^\alpha (gF)(a)], \nonumber
\end{eqnarray}
and similarly
\begin{eqnarray}
\label{eqn19}
I_2 &=& -\int_a^b \int_t^b \G(s) g(s)dsF'(t) dt \\
&=& \Gamma(\alpha)\rho^{\alpha-1}  [\,^\rho I_{a+}^\alpha g(b) + \,^\rho I_{b-}^\alpha g(a)]F(a)  -\Gamma(\alpha)\rho^{\alpha-1} [\,^\rho I_{a+}^\alpha (gF)(b) + \,^\rho I_{b-}^\alpha (gF)(a)]. \nonumber
\end{eqnarray}
From \eqref{eqn18} and \eqref{eqn19}, we get
\begin{eqnarray*}
I = \Gamma(\alpha)\rho^{\alpha-1}  [\,^\rho I_{a+}^\alpha g(b) + \,^\rho I_{b-}^\alpha g(a)](F(a)+F(b))  - 2\Gamma(\alpha)\rho^{\alpha-1} [\,^\rho I_{a+}^\alpha (gF)(b) + \,^\rho I_{b-}^\alpha (gF)(a)].
\end{eqnarray*}
Then, multiplying both sides by $ \frac{\rho^{1-\alpha}}{2\Gamma(\alpha)}$, we get the conclusion.
\end{proof}

With Lemma \ref{lemma2}, we have the following Hermite--Hadamard--Fej{\'e}r type inequality.
\begin{theorem} 
\label{thm3.14}
Let $f: [a, b] \to \R$ be a differentiable mapping on $(a, b)$ and $f'\in L[a, b]$ with $0\leq a<b$. Then $F(x)$ is also differentiable and $F'\in L[a, b]$. If $|f'|$ is convex on $[a, b]$ and $g:[a, b]\to \R$ is continuous, then the following inequality holds:
\begin{eqnarray}
\label{eqn20}
 & & \left| \frac{F(a)+F(b)}{2}\left[ \,^\rho I^\alpha_{a+}g(b)+\,^\rho I^\alpha_{b-}g(a)\right] -\left[ \,^\rho I^\alpha_{a+}(gF)(b)+\,^\rho I^\alpha_{b-}(gF)(a)\right] \right| \\
 & & \hspace{5cm}\leq \frac{(b-a) ||g||_\infty}{\rho^\alpha \Gamma(\alpha+1)}\big( |f'(a)| + |f'(b)|\big)\int_0^1 |\K(t)|dt \,  \nonumber
\end{eqnarray}
with $\alpha>0$ and $\rho>0$, where $||g||_\infty = \sup_{t\in [a, b]} |g(x)|$, and \[ \K(t) = [((1-t)a + bt)^\rho - a^\rho]^\alpha - [b^\rho-((1-t)a + bt)^\rho]^\alpha\] as defined in Lemma \ref{lemma1}.
\end{theorem}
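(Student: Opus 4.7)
The strategy is to mirror the proof of Theorem~\ref{thm3.7}, with Lemma~\ref{lemma2} playing the role that Lemma~\ref{lemma1} played there. First, I apply the equality in Lemma~\ref{lemma2}, take absolute values of both sides, and bring the absolute value inside the outer $t$-integral by the triangle inequality. Next, invoking the decomposition $F'(t)=f'(t)-f'(a+b-t)$ together with the convexity of $|f'|$ exactly as in \eqref{eqn10} from the proof of Theorem~\ref{thm3.7}, I obtain the uniform pointwise bound $|F'(t)|\le |f'(a)|+|f'(b)|$ on $[a,b]$, which pulls outside the integral.

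What remains is to estimate
\[
\int_a^b \left|\int_a^t \G(s)g(s)\,ds-\int_t^b \G(s)g(s)\,ds\right|dt.
\]
I would bound $|g(s)|\le \|g\|_\infty$ and reduce the problem to controlling $\int_a^b |\Psi(t)|\,dt$, where $\Psi(t):=\int_a^t \G(s)\,ds-\int_t^b \G(s)\,ds$. Using the elementary antiderivative $\int s^{\rho-1}(s^\rho-a^\rho)^{\alpha-1}\,ds = (s^\rho-a^\rho)^\alpha/(\rho\alpha)$ and its analogue for the $(b^\rho-s^\rho)^{\alpha-1}$ piece of $\G$, a direct computation yields
\[
\Psi(t)=\frac{2}{\rho\alpha}\bigl[(t^\rho-a^\rho)^\alpha-(b^\rho-t^\rho)^\alpha\bigr].
\]
The change of variable $t=(1-u)a+ub$, $dt=(b-a)\,du$, then gives $\int_a^b |\Psi(t)|\,dt = \frac{2(b-a)}{\rho\alpha}\int_0^1 |\K(u)|\,du$. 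Combining this with the prefactor $\frac{\rho^{1-\alpha}}{2\Gamma(\alpha)}$ inherited from Lemma~\ref{lemma2} collapses to $\frac{b-a}{\rho^\alpha\Gamma(\alpha+1)}$, which matches the constant on the right-hand side of \eqref{eqn20}.

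The step I expect to be most delicate is the reduction $\left|\int_a^t \G(s)g(s)\,ds-\int_t^b \G(s)g(s)\,ds\right|\le \|g\|_\infty|\Psi(t)|$. A straightforward triangle inequality applied term by term gives only the coarser estimate $\|g\|_\infty\int_a^b \G(s)\,ds = \frac{2\|g\|_\infty(b^\rho-a^\rho)^\alpha}{\rho\alpha}$, which would leave the statement with $(b^\rho-a^\rho)^\alpha$ instead of the tighter $\int_0^1 |\K(u)|\,du$. Preserving the $\K$-form should exploit the single sign change of the kernel $\mathrm{sgn}(t-s)\G(s)$ at $s=t$, together with the continuity of $g$, most likely via a Fubini-style rearrangement that integrates the $t$-variable out first and absorbs the sign structure of $g$ into $\|g\|_\infty$ only at the last step.
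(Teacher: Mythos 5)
Your proposal retraces the paper's own proof almost step for step: apply Lemma~\ref{lemma2}, bound $|F'(t)|\le |f'(a)|+|f'(b)|$ through the convexity of $|f'|$ exactly as in \eqref{eqn10}, compute $\int_a^t\G(s)\,ds-\int_t^b\G(s)\,ds=\tfrac{2}{\alpha\rho}\bigl[(t^\rho-a^\rho)^\alpha-(b^\rho-t^\rho)^\alpha\bigr]$, and pass to $\int_0^1|\K(u)|\,du$ by the substitution $t=(1-u)a+ub$; your bookkeeping of the constants matches the paper's.

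However, the step you flag as delicate and leave unproven is a genuine gap, and it is precisely the step the paper itself performs in a single unjustified line: replacing $g(s)$ by $\|g\|_\infty$ while keeping the \emph{difference} of the two kernel integrals, i.e.\ the claim $\bigl|\int_a^t\G(s)g(s)\,ds-\int_t^b\G(s)g(s)\,ds\bigr|\le \|g\|_\infty\bigl|\int_a^t\G(s)\,ds-\int_t^b\G(s)\,ds\bigr|$. Your instinct that the term-by-term triangle inequality only yields the coarser $(b^\rho-a^\rho)^\alpha$ bound is right, but the Fubini/sign-change repair you sketch cannot produce this pointwise inequality, because it is false for general continuous $g$: at the balance point $t_0$ where $\int_a^{t_0}\G=\int_{t_0}^b\G$ the right-hand side vanishes, while the left-hand side is positive for a nonnegative $g$ supported in $(a,t_0)$. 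Even the integrated version can fail: for small $\alpha$ the kernel $\G$ concentrates its mass near the two endpoints, and a continuous $g$ equal to $+1$ near $a$ and $-1$ near $b$ makes $\int_a^b\bigl|\int_a^t\G g-\int_t^b\G g\bigr|\,dt$ of order $1/\alpha$ while $\int_a^b\bigl|\int_a^t\G-\int_t^b\G\bigr|\,dt$ stays bounded. In İşcan's setting (Theorem~\ref{Iscan_thm2}, $\rho=1$) the analogous passage is legitimate only because $g$ is assumed symmetric about $(a+b)/2$ and the kernel $(b-s)^{\alpha-1}+(s-a)^{\alpha-1}$ shares that symmetry, so the difference of the two weighted integrals collapses to a single integral over $[t,\,a+b-t]$; no such symmetry holds for $\G$ when $\rho\neq 1$, and the present theorem does not assume $g$ symmetric. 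So your proof is incomplete exactly where you suspected, and closing it along these lines requires either an additional hypothesis on $g$ (a symmetry condition adapted to the non-symmetric kernel, or using the coarse bound with $(b^\rho-a^\rho)^\alpha$ in place of $\int_0^1|\K|$); the same criticism applies to the paper's own argument.
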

\begin{proof}
Notice that $F'(t) = f'(t)- f'(a+b-t)$, and by the convexity of $|f'(t)|$, we have 
\begin{eqnarray*}
|F'(t)| &=& |f'(t)- f'(a+b-t)| \leq |f'(t)|- |f'(a+b-t)|  \\
&=& |f'(\frac{b-t}{b-a}a+ \frac{t-a}{b-a}b)|+ |f'(\frac{t-a}{b-a}a+ \frac{b-t}{b-a}b)| \\
&\leq& \frac{b-t}{b-a} |f'(a)|+\frac{t-a}{b-a}|f'(b)| + \frac{t-a}{b-a}|f'(a)| + \frac{b-t}{b-a}|f'(b)| \\
&=& |f'(a)|+|f'(b)|. 
\end{eqnarray*}
Also 
\begin{eqnarray*}
& & \int_a^t \G(s)ds - \int_t^b \G(s)ds \\
&=& \int_a^t \left( \frac{s^{\rho-1}}{(b^\rho-s^\rho)^{1-\alpha}} + \frac{s^{\rho-1}}{(s^\rho-a^\rho)^{1-\alpha}} \right)ds - \int_t^b \left( \frac{s^{\rho-1}}{(b^\rho-s^\rho)^{1-\alpha}} + \frac{s^{\rho-1}}{(s^\rho-a^\rho)^{1-\alpha}} \right)ds \\
&=& \left[  -\frac{(b^\rho -s^\rho)^\alpha}{\alpha \rho} + \frac{(s^\rho-a^\rho)^\alpha}{\alpha \rho}\right]^t_a - \left[  -\frac{(b^\rho -s^\rho)^\alpha}{\alpha \rho} + \frac{(s^\rho-a^\rho)^\alpha}{\alpha \rho}\right]^b_t \\
&=& \frac{2}{\alpha \rho} [(t^\rho-a^\rho)^\alpha -(b^\rho - t^\rho)^\alpha].
\end{eqnarray*}
Hence by Lemma \ref{lemma2}, 
\begin{eqnarray*}
& & \left| \frac{F(a)+F(b)}{2}\left[ \,^\rho I^\alpha_{a+}g(b)+\,^\rho I^\alpha_{b-}g(a)\right] -\left[ \,^\rho I^\alpha_{a+}(gF)(b)+\,^\rho I^\alpha_{b-}(gF)(a)\right] \right| \\
& \leq&  \frac{\rho^{1-\alpha}}{2\Gamma(\alpha)} \int_a^b \left| \int_a^t \G(s) g(s)ds - \int_t^b \G(s) g(s)ds \right| |F'(t)| dt \\
&\leq& \frac{\rho^{1-\alpha} ||g||_\infty}{2\Gamma(\alpha)} \int_a^b \left| \int_a^t \G(s)ds - \int_t^b \G(s)ds \right| dt (|f'(a)|+|f'(b)|) \\
&=& \frac{ ||g||_\infty}{\rho^\alpha \Gamma(\alpha+1)} \int_a^b \left| (t^\rho-a^\rho)^\alpha -(b^\rho - t^\rho)^\alpha \right| dt (|f'(a)|+|f'(b)|) \\
&=& \frac{ (b-a)||g||_\infty}{\rho^\alpha \Gamma(\alpha+1)} \int_0^1 \left| \K(t) \right| dt (|f'(a)|+|f'(b)|)
\end{eqnarray*}
where $\K(t) = [((1-t)a + bt)^\rho - a^\rho]^\alpha - [b^\rho-((1-t)a + bt)^\rho]^\alpha$.
\end{proof}

\begin{remark}
In Theorem \ref{thm3.14}, 
\begin{enumerate}
\item if we take $g(x) =1$ in inequality \eqref{eqn20}, then it becomes inequality \eqref{eqn9} in Theorem \ref{thm3.7}.
\item if, in addition, $g(x)$ is symmetric to $(a+b)/2$, letting $\rho \to 1$, inequality \eqref{eqn20} becomes inequality \eqref{Iscan_HH2} of Theorem \ref{Iscan_thm2}.
\end{enumerate}
\end{remark}



%

%
%
%
%
%
\section*{Acknowledgement}
The research was partially supported by the U.S. Army Research Office grant W911NF-15-1-0537.
%
%
%
%
%

%
%
%
\bibliographystyle{elsarticle-num}
%
%
%

%
\end{document}